\newtheorem{theorem}{Theorem}
\newtheorem{corollary}[theorem]{Corollary}
\newtheorem{lemma}[theorem]{Lemma}
\begin{document}

\title{Avoidability index for binary patterns with reversal}

\author{James D. Currie and Philip Lafrance}

\address{Department of Mathematics and Statistics \\
University of Winnipeg \\
515 Portage Avenue \\
Winnipeg, Manitoba R3B 2E9 (Canada)}

\email{j.currie@uwinnipeg.ca, plafranc@uwaterloo.ca}

\thanks{The first author was supported by an NSERC Discovery Grant, and also by Deutsche Forschungsgemeinschaft, which supported him through its Mercator program. The second author was supported through the NSERC USRA program.}

\subjclass[2000]{68R15}

\date{\today}

\begin{abstract}
For every pattern  $p$  over the alphabet $\{x,y,x^R,y^R\}$, we specify the least $k$ such that $p$ is $k$-avoidable.
\end{abstract}

\maketitle

\section{Introduction}
The study of words avoiding patterns is a major theme in combinatorics on words, explored by Thue and others \cite{thue,BEM,zimi,roth,cass,loth97,loth02}. The reversal map is a basic notion in combinatorics on words, and it is therefore natural that recently work has been done on patterns with reversals by Shallit and others \cite{CR,DMSS, RS}. (More general ideas, such as patterns with involutions or other permutations, have also been studied very recently by the first author and others \cite{MMN,BN,curr,BCN}.) Shallit et al. \cite{DMSS} recently asked whether the number of binary words avoiding $xxx^R$ grows polynomially with length, or exponentially. The surprising answer by Currie and Rampersad \cite{CR} is `Neither'. As B. Adamczewski \cite{adam}  has observed, this implies that the language of binary words avoiding $xxx^R$ is not context-free -- a result which has so far resisted proof by standard methods.

Basic questions about patterns with reversal have not yet been addressed. In this article, we completely characterize the $k$-avoidability of an arbitrary  binary pattern with reversal. This is a direct (and natural) generalization of the work of Cassaigne \cite{cass} characterizing $k$-avoidability for binary patterns without reversal, and involves a blend of classical results and new constructions. 

\section{Preliminaries}
For general concepts and notations involving combinatorics on words, we refer the reader to the work of Lothaire \cite{loth97,loth02}.
	Let $\Sigma$ be the alphabet $\Sigma=\{x,x^R,y,y^R\}$. We call a word $p\in\Sigma^*$ a {\bf binary pattern with reversal}. For a positive integer $k$, let $T_k$ be the alphabet $\{0,1,\ldots, k-1\}$. We refer to words (resp., sequences, morphisms) on $T_2$ as {\bf binary words} (resp., {\bf binary sequences, binary morphisms}). For words of $T_k^*$, let $\_^R$ denote the reversal antimorphism on $T_k$; thus if $a_1$, $a_2,\ldots, a_n\in T_k$, then
$$(a_1a_2\cdots a_n)^R=a_na_{n-1}\cdots a_1.$$
(Note the two distinct usages of $\_^R$: In $\Sigma$, the notation distinguishes pairs of alphabet letters; on $T_k^*$ it stands for reversal.)
We say that a morphism $f:\Sigma^*\rightarrow T_k^*$ {\bf respects reversal} if $f(x^R)=f(x)^R$, $f(y^R)=f(y)^R$. Thus any morphism from $\{x,y\}^*$ to $T_k^*$  extends uniquely to a morphism on $\Sigma$ respecting reversal. Let $p$ be a binary pattern with reversal. An {\bf instance} of $p$ is the image of $p$ under some non-erasing morphism which respects reversal. For example,  an instance of $p=xyyx^R$ is a word $XYYX^R$, where $X$ and $Y$ are non-empty; this is the image of $p$ under the non-erasing morphism respecting reversal given by $f(x)=X$, $f(y)=Y$. If pattern with reversal $p$ does not contain either of $x^R$ and $y^R$, then an instance of $p$ is simply an instance of pattern $p$ in the usual sense.

Let $k$ be a positive integer. Let $p$ be a  binary pattern with reversal. A word $w$ {\bf avoids} $p$ if no factor of $w$ is an instance of $p$. Pattern $p$ is $k$-avoidable if there are arbitrarily long words of $T_k^*$ which avoid $p$; equivalently, there is an $\omega$-word ${\bf w}$ over $T_k$ such that every finite prefix of ${\bf w}$ avoids $p$. If $p$ is not $k$-avoidable, it is $k$-unavoidable; note that every factor of a $k$-unavoidable word is $k$-unavoidable. Word $p$ is {\bf avoidable} if it is $k$-avoidable for some $k$; otherwise, $p$ is {\bf unavoidable}. If $p$ is avoidable, then the {\bf avoidability index} of $p$ is defined to be the least $k$ such that $p$ is $k$-avoidable. If $p$ is unavoidable, we define the unavoidability index of $p$ to be $\infty$.

\section{Classification}

Consider the morphisms $\iota_1, \iota_2$ on $\Sigma^*$ given by:
$$\iota_1(x)=x^R,\iota_1(x^R)=x, \iota_1(y)=y,\iota_1(y^R)=y^R,$$ $$\iota_2(x)=y,\iota_2(x^R)=y^R,\iota_2(y)=x,\iota_2(y^R)=x^R.$$ Thus $\iota_1$ switches $x$ and $x^R$, while $\iota_2$ switches $x$ and $y$, $x^R$ and $y^R$. Thus $\iota_2(\iota_1(\iota_2))$ switches $y$ and $y^R$. One checks the following:
\begin{lemma}If $f:\Sigma^*\rightarrow T_k^*$ is a morphism respecting reversal, then so is $f\circ \iota_j$ for $j=1,2$.
\end{lemma}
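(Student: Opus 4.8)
The plan is to unwind the definitions directly. Since a composition of morphisms is again a morphism, $g:=f\circ\iota_j$ is automatically a morphism $\Sigma^*\to T_k^*$, so the only thing to check is the pair of identities $g(x^R)=g(x)^R$ and $g(y^R)=g(y)^R$. I would handle $j=1$ and $j=2$ separately, and in each case simply evaluate both sides on the four generators $x, x^R, y, y^R$, using the explicit formulas defining $\iota_j$, the hypothesis that $f$ respects reversal, and the fact that reversal is an involution on $T_k^*$, i.e.\ $(w^R)^R=w$ for all $w\in T_k^*$.

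For $j=1$: since $\iota_1$ fixes $y$ and $y^R$, we get $g(y^R)=f(y^R)=f(y)^R=g(y)^R$ immediately; and since $\iota_1$ interchanges $x$ and $x^R$, we have $g(x)=f(x^R)=f(x)^R$, while $g(x^R)=f(x)=\bigl(f(x)^R\bigr)^R=g(x)^R$, the last step being exactly where involutivity of reversal is used. For $j=2$: since $\iota_2$ carries the pair $\{x,x^R\}$ to the pair $\{y,y^R\}$ and vice versa, we obtain $g(x^R)=f(y^R)=f(y)^R=g(x)^R$ and symmetrically $g(y^R)=f(x^R)=f(x)^R=g(y)^R$, each equality again a single application of the hypothesis on $f$. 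This exhausts the cases, establishing that $g$ respects reversal.

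There is essentially no obstacle here: the statement is a short bookkeeping exercise. The one point requiring care is to keep the two meanings of the superscript $R$ apart — on the left-hand sides it is the formal symbol distinguishing letters of $\Sigma$, whereas on the right-hand sides it denotes the reversal antimorphism on $T_k^*$ — and to remember that the latter squares to the identity, which is precisely what makes the $j=1$ verification on $x^R$ close up.
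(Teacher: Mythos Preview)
Your proof is correct and is exactly the routine verification the paper intends: the lemma is stated with the preamble ``One checks the following'' and no proof is given, so the paper is leaving precisely this direct unwinding of the definitions to the reader. There is nothing to add or compare.
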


 Let $\iota_3$ denote the reversal antimorphism on $\Sigma^*$.
\begin{lemma}
Let $p$ be a binary pattern with reversal. If $w$ is an instance of $p$, then $w^R$ is an instance of $\iota_3(p)$.
\end{lemma}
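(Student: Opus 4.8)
The plan is to convert a morphism realizing $w$ as an instance of $p$ into a morphism realizing $w^R$ as an instance of $\iota_3(p)$, by precomposing with the ``$R$-swap''. So suppose $w$ is an instance of $p$; fix a non-erasing morphism $f:\Sigma^*\rightarrow T_k^*$ respecting reversal with $w=f(p)$. Let $\sigma:\Sigma^*\rightarrow\Sigma^*$ be the morphism interchanging $x$ with $x^R$ and $y$ with $y^R$. Since $\sigma=\iota_1\circ\iota_2\circ\iota_1\circ\iota_2$, four applications of the first lemma (the one asserting that $f\circ\iota_j$ respects reversal) show that $f\circ\sigma$ again respects reversal; alternatively one checks this directly, e.g. $(f\circ\sigma)(x^R)=f(x)$ and $\bigl((f\circ\sigma)(x)\bigr)^R=\bigl(f(x^R)\bigr)^R=f(x)$. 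Moreover $f\circ\sigma$ is non-erasing because $\sigma$ sends letters to letters and $f$ is non-erasing. I will take $g:=f\circ\sigma$ as the witnessing morphism for $\iota_3(p)$.

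The crux is the identity
$$ f(u)^R \;=\; g\bigl(\iota_3(u)\bigr) \qquad\text{for all } u\in\Sigma^*. $$
To prove it, write $u=a_1a_2\cdots a_n$ with each $a_i\in\Sigma$. Since reversal is an antimorphism on $T_k^*$, we have $f(u)^R=f(a_n)^R f(a_{n-1})^R\cdots f(a_1)^R$, so it suffices to verify the one-letter statement $f(a)^R=f(\sigma(a))$ for each $a\in\Sigma$. This is immediate from the definition of $\sigma$ and the fact that $f$ respects reversal: for instance $f(x)^R=f(x^R)=f(\sigma(x))$ and $f(x^R)^R=\bigl(f(x)^R\bigr)^R=f(x)=f(\sigma(x^R))$, and symmetrically for $y,y^R$. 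Substituting, $f(u)^R=f(\sigma(a_n))f(\sigma(a_{n-1}))\cdots f(\sigma(a_1))=g(a_na_{n-1}\cdots a_1)=g(\iota_3(u))$, since $\iota_3$ merely reverses the order of letters.

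Taking $u=p$ then gives $w^R=f(p)^R=g(\iota_3(p))$ with $g$ a non-erasing morphism respecting reversal, so $w^R$ is an instance of $\iota_3(p)$, as desired. The argument is routine once the auxiliary morphism $\sigma$ is identified; the only real care needed is in keeping straight the interplay of the two antimorphisms — reversal on $T_k^*$ and $\iota_3$ on $\Sigma^*$ — with $\sigma$, and in invoking the first lemma to see that precomposition with $\sigma$ preserves the property of respecting reversal.
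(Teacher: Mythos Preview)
Your proof is correct. The paper states this lemma without proof, treating it as immediate, so there is no argument to compare against; your construction of $g=f\circ\sigma$ (with $\sigma$ the letter-swap $x\leftrightarrow x^R$, $y\leftrightarrow y^R$) and the verification that $f(u)^R=g(\iota_3(u))$ is exactly the natural way to fill in the details.
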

For $j=1,2,3$, $\iota_j^2$ is the identity morphism on $\Sigma^*$. It follows that the relation on $\Sigma^*$ given by
$$p\sim q\iff \mbox{$q$ is obtained from $p$ by a sequence of applications of $\iota_1$, $\iota_2$ and $\iota_3$}$$

is an equivalence relation.
Combining the previous two lemmas gives the following:

\begin{lemma} Let $k$ be a positive integer.  Let $p$, $q$ be binary patterns with reversal. Suppose that $q\sim p$. Then $p$ is $k$-avoidable if and only if $q$ is $k$-avoidable.
\end{lemma}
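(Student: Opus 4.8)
The plan is to reduce to single-step equivalences and then feed off the two preceding lemmas. Since $\iota_1^2$, $\iota_2^2$, and $\iota_3^2$ are all the identity on $\Sigma^*$, the relation $\sim$ is exactly the equivalence relation generated by the three maps $\iota_1,\iota_2,\iota_3$; so $p\sim q$ means there is a finite chain $p=p_0,p_1,\dots,p_n=q$ with each $p_{i+1}=\iota_{j_i}(p_i)$ for some $j_i\in\{1,2,3\}$. By induction on $n$ it suffices to prove: for each $j$, the pattern $p$ is $k$-avoidable if and only if $\iota_j(p)$ is. Because each $\iota_j$ is an involution, $p$ and $\iota_j(p)$ play symmetric roles, so it is enough to establish one implication, say that $k$-avoidability of $p$ implies $k$-avoidability of $\iota_j(p)$.

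I would handle $j=1$ and $j=2$ together, using the $\omega$-word characterization. Suppose $\mathbf w$ is an $\omega$-word over $T_k$ every prefix of which avoids $p$; I claim $\mathbf w$ also avoids $\iota_j(p)$. If not, some factor $u$ of $\mathbf w$ is an instance of $\iota_j(p)$, say $u=g(\iota_j(p))$ for a non-erasing morphism $g$ respecting reversal. Then $u=(g\circ\iota_j)(p)$, and by the first lemma of Section 3 the morphism $g\circ\iota_j$ respects reversal; it is plainly non-erasing, so $u$ is an instance of $p$, contradicting the choice of $\mathbf w$. Hence $\mathbf w$ witnesses $k$-avoidability of $\iota_j(p)$.

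For $j=3$ the argument is the same in spirit but uses the ``arbitrarily long finite words'' formulation, since $\iota_3$ is an antimorphism and reversal is awkward for one-sided infinite words. Given an arbitrarily long finite word $W$ over $T_k$ avoiding $p$, I would show $W^R$ avoids $\iota_3(p)$. A word $v$ is a factor of $W^R$ exactly when $v^R$ is a factor of $W$; if some factor $v$ of $W^R$ were an instance of $\iota_3(p)$, then by the second lemma of Section 3 (applied to the pattern $\iota_3(p)$, together with $\iota_3^2=\mathrm{id}$) the word $v^R$ would be an instance of $\iota_3(\iota_3(p))=p$ and a factor of $W$, contradicting the choice of $W$. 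As $W$ ranges over arbitrarily long words, so does $W^R$, giving $k$-avoidability of $\iota_3(p)$.

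There is no real obstacle; the points needing care are purely bookkeeping: invoking $\iota_j^2=\mathrm{id}$ both to see that $\sim$ is generated by single steps and to make each single step self-inverse, and noticing that the $j=3$ case should be phrased with finite words rather than $\omega$-words. If one insisted on the $\omega$-word phrasing throughout, a harmless compactness (König's lemma) step producing an $\omega$-word avoiding $\iota_3(p)$ from the family $\{W^R\}$ would close the gap.
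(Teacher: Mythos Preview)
Your argument is correct and is exactly the intended one: the paper's proof is simply ``Combining the previous two lemmas,'' and you have spelled out precisely how that combination goes, treating $\iota_1,\iota_2$ via the first lemma and $\iota_3$ via the second. Your care with the finite-word versus $\omega$-word formulation for $j=3$ is a nice touch that the paper leaves implicit.
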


Consider the lexicographic order on $\Sigma^*$ generated by $x<x^R<y<y^R$. If $p\in\Sigma^*$, define
$\ell(p)$ to be the lexicographically least element of the equivalence class of $p$ under $\sim.$ For example, $\ell(x^Ryy)=xxy$.

 Let
\begin{eqnarray*}
C_1(p)&=&\{p\},\\
C_{n+1}(p)&=&C_n(p)\cup \bigcup_{j=1}^3\iota_j(C_n(p)), n\ge 1.
\end{eqnarray*}
Since the $\iota_j$ preserve length, for any pattern $p$ only the finitely many words of $\Sigma^{|p|}$ can be equivalent to $p$. Thus for some positive integer $m$ we will have $C_{m+1}(p)=C_m(p)$, and this $C_m(p)$ is the equivalence class of $p$ under $\sim$, which we will denote by $C(p)$.

Let

\begin{eqnarray*}S_2&=&\{xxx,
xxyxy^R,xxyx^Ry,
xxyxyy,xxyx^Ry^R,xxyyx, xxyyx^R,  
xx^R,
xyxxy,\\
&&xyx^Rx^Ry,xyxyx,xyxyx^R, 
xyx^Ryx,xyx^Ry^Rx,
xyyx^R,
 xyxy^Rx, xyxy^Rx^R
\}\end{eqnarray*}

and

$$S_3=\{xx, xyxy, xyxy^R, xyx^Ry^R\}.$$

One checks that $s=\ell(s)$ for all $s\in S_2\cup S_3$.
The following theorems are proved in Sections \ref{2-constructions} and \ref{3-constructions}, respectively.
\begin{theorem}\label{S_2}
The patterns of $S_2$ are 2-avoidable.\end{theorem}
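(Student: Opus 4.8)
The plan is to exhibit, for each pattern $s\in S_2$, an explicit infinite binary word avoiding $s$, rather than treating the seventeen patterns one at a time from scratch. The key organizing observation is that many of the listed patterns are divisible by (contain as a factor) a shorter pattern that is already known to be $2$-avoidable, or are equivalent under $\sim$ to such a pattern together with Lemma~3; for instance any word avoiding the square $xx$ automatically avoids every pattern in which $xx$ occurs, and Cassaigne's classification of binary patterns without reversal \cite{cass} already supplies $2$-avoiding words for the reversal-free members of $S_2$ (such as $xxx$, $xyxxy$, $xyxyx$). So the first step is to prune the list: sort $S_2$ into (i) patterns handled immediately by an overlap-free or $xx$-avoiding or cube-free binary word, (ii) reversal-free patterns covered by \cite{cass}, and (iii) a small residue of genuinely new patterns with reversal.

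For the residue I would look for a single well-chosen binary morphism $h:\{0,1\}^*\to\{0,1\}^*$ (or a handful of them) whose fixed point, or the image under $h$ of a standard avoiding word like the Thue--Morse word ${\bf t}$ or the ternary-squarefree word, avoids the target pattern. The uniform strategy is: (a) guess $h$ with a short, highly structured image (lengths like $h(0),h(1)$ of modest size, chosen so that $h$ is injective and "synchronizing", meaning the image has bounded factor-decomposition ambiguity); (b) reduce the claim "$h({\bf w})$ avoids $s$" to a finite check by a standard pumping/periodicity argument — any long instance $XYYX^R$ (or whichever shape $s$ has) inside $h({\bf w})$ forces, via the synchronization of $h$, a corresponding instance in ${\bf w}$ of a related short pattern that ${\bf w}$ already avoids, and only finitely many "small" instances remain to be ruled out by direct inspection. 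This is the classical morphism-plus-synchronization method; its correctness for each candidate $h$ is mechanical once $h$ is fixed.

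The main obstacle, and the part that is genuinely content rather than bookkeeping, is finding the right morphisms for the handful of patterns in which the reversal structure actually bites — patterns like $xx^R$ and the various $xyx^Ry$-type and $xyx^Ry^Rx$-type words, where the constraint is not symmetric under the letter-renaming that a non-reversal pattern would allow, so off-the-shelf squarefree or overlap-free words need not work. For $xx^R$ in particular one needs a binary word with no factor of the form $Uvu$ where $U$ reads the same forwards as... more precisely no factor equal to $ZZ^R$; such words are known (e.g. arising from fixed points avoiding "reverse-squares"), and I would either cite or reconstruct a suitable one. Concretely I expect the proof to end up as a sequence of short paragraphs, one per residual pattern, each stating a morphism, asserting its relevant synchronization property (with the finite verification left to the reader or relegated to a routine computation), and concluding via Lemma~3 for the equivalence-class representatives.

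Before reading the authors' proof I would flag that the delicate choices are exactly for $xx^R$, $xyx^Ry$, $xyx^Ry^Rx$, and $xyxy^Rx$; the rest should fall to a cube-free or square-free binary word plus Cassaigne's results and the $\sim$-invariance of $k$-avoidability.
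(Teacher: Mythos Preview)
Your high-level plan matches the paper's: partition $S_2$, dispose of the reversal-free members $\{xxx, xxyxyy, xxyyx, xyxxy, xyxyx\}$ via Thue/Roth/Cassaigne, and treat the remainder by morphic images of a standard word. Two points need correction, however. First, there is no infinite \emph{binary} word avoiding $xx$, so ``$xx$-avoiding binary word'' cannot be part of the toolkit here. Second, your list of delicate cases is off: $xyx^Ry$ is not even in $S_2$ (indeed $\ell(xyx^Ry)=xyxy^R\in S_3$), and $xx^R$ is not delicate at all---it, together with six other patterns ($xxyxy^R$, $xxyx^Ry$, $xxyx^Ry^R$, $xxyyx^R$, $xyx^Rx^Ry$, $xyyx^R$), is avoided by the periodic word $(01)^\omega$. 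The paper establishes this uniformly via a neat graph criterion (Theorem~\ref{alternating}): build a graph $G(p)$ on $\Sigma$ with an edge $a^R$--$b$ for each length-$2$ factor $ab$ of $p$; then $(01)^\omega$ contains an instance of $p$ iff $G(p)$ is bipartite, and for each of these seven patterns $G(p)$ has an odd cycle. You miss this device, and your suggested ``reverse-square-free fixed point'' for $xx^R$ is unnecessary.

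The genuinely hard residue is $xyxyx^R$ (dispatched by the Fraenkel--Simpson word, since any instance forces a square of length $\ge 4$) together with $\{xyxy^Rx^R, xyx^Ry^Rx, xyxy^Rx, xyx^Ryx\}$. For these four the paper does exactly what you propose---apply tailored morphisms $f_1,\dots,f_4$ to the Thue--Morse word ${\bf t}$---but the verifications for $xyxy^Rx$ and $xyx^Ryx$ go well beyond a bounded search: one must argue structurally (via ``left completions'' and bispecial factors of $f_i({\bf t})$) that any sufficiently long instance pulls back to an overlap in ${\bf t}$. Your synchronization-plus-pumping sketch has the right shape, but the proposal supplies neither the specific morphisms nor these reductions, and that is where the actual content of the proof lies.
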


\begin{theorem}\label{S_3}
The patterns of $S_3$ are 3-avoidable.\end{theorem}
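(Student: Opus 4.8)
The plan is to handle the four patterns of $S_3$ separately, disposing of $xx$ and $xyxy$ with classical results and attacking $xyxy^R$ and $xyx^Ry^R$ with explicit constructions. For $xx$ we appeal to Thue \cite{thue}: there is an infinite squarefree word over $T_3$, and no factor of a squarefree word is an instance of $xx$. For $xyxy$, observe that a word $XYXY$ with $X,Y$ nonempty is exactly a square $ZZ$ with $Z=XY$, $|Z|\ge 2$, and conversely every square of period at least $2$ has this form; hence \emph{any} squarefree word over $T_3$, in particular Thue's, avoids $xyxy$ as well. So these two patterns cost nothing beyond squarefreeness, and for the remaining two it would suffice to produce a single squarefree word over $T_3$ that additionally avoids $xyxy^R$ and $xyx^Ry^R$ --- such a word would prove the whole theorem at once.

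For $xyxy^R$ and $xyx^Ry^R$ (possibly via the same morphism) I would exhibit an explicit morphism $h$ over $T_3$, set $\mathbf{w}=h^\omega(0)$, and prove that $\mathbf{w}$ avoids the pattern by the standard template for morphic words, adapted to reversal. The preliminary step is a synchronization property of $h$: there is a constant $N$ such that every occurrence in $\mathbf{w}$ of a block $h(a)$, or --- crucially --- of a \emph{reversed} block $h(a)^R$, that is at least $N$ letters long is aligned with the factorization of $\mathbf{w}$ into the blocks $h(0),h(1),h(2)$; equivalently, the set $\{h(0),h(1),h(2)\}\cup\{h(0)^R,h(1)^R,h(2)^R\}$ is a code with bounded synchronization delay, which is a finite check on $h$. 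Assuming this, suppose for contradiction that $\mathbf{w}$ contains an instance of the pattern, say $XYXY^R$ (resp.\ $XYX^RY^R$), and choose one with $|X|+|Y|$ minimal. If $|X|+|Y|$ is large relative to $\max_a|h(a)|$, then $\max(|X|,|Y|)$ is large, so the two occurrences of the longer block --- one read forwards, the other, inside the reversed half, read backwards --- are both block-aligned in the same way; propagating this alignment through the whole instance lets one ``divide by $h$'' and extract a strictly shorter instance of the same pattern in $\mathbf{w}$, contradicting minimality. The finitely many residual instances, in which $|X|$ and $|Y|$ are both bounded, are then ruled out by inspecting a sufficiently long prefix of $\mathbf{w}$, or more cheaply by checking that each short instance is already excluded by squarefreeness or by a short list of forbidden factors.

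For $xyx^Ry^R$ there is a small simplification: its set of instances is closed under reversal, since if $w=XYX^RY^R$ then $w^R=YXY^RX^R$ is again an instance, namely the image of $xyx^Ry^R$ under the reversal-respecting morphism $x\mapsto Y$, $y\mapsto X$ (conceptually, the lemma on instances of $\iota_3(p)$); the roles of $X$ and $Y$ are exchanged, which can be used to normalize the case analysis. The step I expect to be the main obstacle is the descent when reversal is present: one must run the synchronization argument on the reversed occurrences $X^R$ and $Y^R$, viewing them as concatenations of reversed blocks and realigning via the reversal-closed synchronizing code, and one must verify that the shorter instance so produced is genuine, i.e.\ that neither of its two pieces has collapsed to the empty word. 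Equally delicate, and logically prior, is choosing $h$ in the first place: its blocks together with their reversals must synchronize well, and ideally $h^\omega(0)$ should be squarefree, so that one construction settles all of $S_3$.
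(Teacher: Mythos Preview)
Your treatment of $xx$ and $xyxy$ matches the paper exactly: Thue's squarefree ternary word disposes of both. For $xyxy^R$ and $xyx^Ry^R$, however, what you have is a plan rather than a proof, and the plan is missing its load-bearing component: you never exhibit the morphism $h$, and you yourself identify ``choosing $h$ in the first place'' as the logically prior and equally delicate step. Without $h$ there is nothing to check. There is also a genuine obstacle in your descent step that you gloss over. When you ``divide by $h$'' an aligned instance $XYXY^R$, the piece $Y^R$ parses (under your enlarged synchronizing code) as a concatenation of \emph{reversed} blocks $h(a)^R$, so its $h$-preimage --- call it $Z'$ --- is a priori a word over a six-symbol alphabet, not a factor of $\mathbf{w}$; and even if each $h(a)^R$ happens to equal some $h(b)$, there is no reason why $Z'=(Y')^R$ where $Y=h(Y')$, which is what you need for the quotient $X'Y'X'Z'$ to again be an instance of $xyxy^R$. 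That identity holds only if $h$ commutes with reversal letterwise, i.e.\ every $h(a)$ is a palindrome, a severe constraint you have not imposed. So the descent, as sketched, does not close.

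The paper sidesteps all of this with a one-line structural trick. Starting from the Fraenkel--Simpson binary word ${\bf f}$ (whose only squares are $00$, $11$, $0101$), it forms a ternary word ${\bf g}$ by replacing every factor $10$ of ${\bf f}$ by $12220$. Then ${\bf g}$ has no length-two factor $cd$ with $c\equiv d+1\pmod 3$; consequently, any factor $z$ of ${\bf g}$ containing two distinct letters has $z^R$ \emph{not} a factor of ${\bf g}$. In a putative instance $XYXY^R$ (resp.\ $XYX^RY^R$) inside ${\bf g}$, both $Y$ and $Y^R$ (resp.\ also $X$ and $X^R$) occur as factors, which forces $Y$ (resp.\ $X$ and $Y$) to be a power of a single letter, whence $Y=Y^R$ (resp.\ $X=X^R$, $Y=Y^R$). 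The instance collapses to an ordinary square $(XY)^2$ with $|XY|\ge 2$, and a short argument using the square inventory of ${\bf f}$ shows ${\bf g}$ contains no such square. No morphic descent, no synchronization, no case analysis on short instances: the ``directionality'' of ${\bf g}$ kills the reversal outright.
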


We will prove the following:

\begin{theorem}[Main Theorem]\label{main theorem}
Let $p$ be a binary pattern with reversal. The avoidability index of $p$ is 2, 3 or $\infty$.
\end{theorem}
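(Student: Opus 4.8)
The plan is to establish the apparently stronger fact that \emph{every avoidable binary pattern with reversal is $3$-avoidable}. This suffices to prove Theorem~\ref{main theorem}: no nonempty pattern is $1$-avoidable, since the morphism sending both $x$ and $y$ to $0$ realizes $p$ as the factor $0^{|p|}$ of every sufficiently long word of $T_1^{*}$, so an avoidable pattern has index $2$ or $3$, while an unavoidable one has index $\infty$. Throughout, we may replace $p$ by $\ell(p)$: the lemma asserting that $\sim$-equivalent patterns are simultaneously $k$-avoidable lets us work only with patterns fixed by $\ell$, which still form an infinite family but come in a manageable normal form.

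The second ingredient is a divisibility principle for patterns with reversal. Say that $q$ \emph{divides} $p$ if every instance of $p$ contains, as a factor, an instance of $q$; when this holds, any word avoiding $q$ avoids $p$, so $k$-avoidability of $q$ implies $k$-avoidability of $p$. Together with the $\sim$-invariance of avoidability, divisibility lets us transfer bounds in several ways: most simply when some member of $C(q)$ is a factor of some member of $C(p)$, but also through morphic substitution and through concatenation identities — for instance, every instance $AAA^{R}$ of $xxx^{R}$ contains the factor $AA^{R}$, so avoiding $xx^{R}$ forces avoidance of $xxx^{R}$. Combined with Theorems~\ref{S_2} and \ref{S_3}, this principle reduces the upper-bound half of the Main Theorem to a single combinatorial statement.

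That statement is: \emph{every avoidable binary pattern with reversal is divided by some element of $S_2\cup S_3$}; equivalently, by contraposition, a binary pattern with reversal divided by no element of $S_2\cup S_3$ is unavoidable. I would prove this by a structural case analysis on $\ell(p)$, split according to which of $x,x^{R},y,y^{R}$ actually occur and according to the arrangement of the leading and trailing letters. The one-variable patterns are disposed of immediately: $x$ and $x^{R}$ are unavoidable, $xx$ lies in $S_3$, and every longer pattern over $\{x,x^{R}\}$ is $\sim$-equivalent to one containing $xxx$ or $xx^{R}$. For genuinely two-variable patterns one shows that, after applying suitable $\iota_j$, a pattern that is long enough must contain a factor that is an instance of one of the seed patterns (all of which have length at most $5$), leaving only finitely many patterns to inspect; each survivor is then handled by hand, either by exhibiting a dividing seed or by a direct proof that it is unavoidable, the unavoidable survivors forming a short list built, as in Cassaigne's classification of reversal-free binary patterns, from Zimin-type words adapted to the reversal structure.

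I expect this last step to be the main obstacle. The bookkeeping that forces a long pattern to contain a seed factor is delicate precisely because each base letter now occurs in two flavours — roughly doubling the number of local configurations one must exclude — and because the reversal letters interact nontrivially with the ways a seed can be embedded; moreover the residual finite list cannot be dismissed in bulk and must be checked pattern by pattern, including a self-contained verification of unavoidability for the handful of patterns that are genuinely unavoidable. Once the combinatorial statement is secured, the Main Theorem follows at once: an avoidable $p$ is divided by some $q\in S_2\cup S_3$, hence is $3$-avoidable by Theorem~\ref{S_2} or Theorem~\ref{S_3}, and being nonempty it is not $1$-avoidable, so its avoidability index is $2$, $3$, or $\infty$.
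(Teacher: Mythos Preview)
Your strategy is correct and matches the paper's: show that every avoidable binary pattern with reversal is $3$-avoidable by arguing that any $3$-unavoidable $p$ (in normal form $p=\ell(p)$) must be one of a short list of unavoidable patterns. However, you substantially overestimate the work involved. The paper needs only five seeds --- $xx$ and $xx^R$ of length~$2$, together with $xyxy$, $xyxy^R$, $xyx^Ry^R$ of length~$4$ --- not all of $S_2\cup S_3$, and only the plain factor relation, not your more general divisibility notion. The key observation you are missing is that excluding factors equivalent to $xx$ or $xx^R$ already forces the letters of $\ell(p)$ to alternate strictly between $\{x,x^R\}$ and $\{y,y^R\}$; hence any length-$4$ prefix of $\ell(p)$ is automatically $\sim$-equivalent to one of the three length-$4$ patterns in $S_3$. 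It follows at once that a $3$-unavoidable $p$ satisfies $|\ell(p)|\le 3$, and the survivors are exactly the prefixes of $xyx$ and $xyx^R$, whose unavoidability is verified by the single-letter substitution $x\mapsto a$. The entire argument (the paper's Theorem~\ref{unavoidable}) occupies half a page, with no Zimin-style analysis and no residual finite list beyond those six short prefixes.
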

In fact, we characterize exactly which of these patterns are 2-avoidable, 3-avoidable and unavoidable in the next two theorems.

\begin{theorem}\label{unavoidable}
Let $p$ be a binary pattern with reversal. If $\ell(p)$ is a prefix of one of $xyx$ and $xyx^R$, then $p$ is unavoidable; otherwise $p$ is 3-avoidable.
\end{theorem}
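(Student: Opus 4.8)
The plan is to treat the two directions of the characterization separately: the unavoidable patterns are dispatched by a short counting argument, while the avoidable ones are reduced, via Theorems \ref{S_2} and \ref{S_3}, to a finite case analysis on $\ell(p)$.

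\emph{The unavoidable patterns.} Since $k$-avoidability is constant on each $\sim$-class (the lemma above), it suffices to show that the patterns $\epsilon,x,xy,xyx,xyx^R$ are unavoidable. The first three are trivial, and $xyx$, $xyx^R$ are handled by one observation: if $w$ is a word of length at least $2t+1$ over a $t$-letter alphabet, then some letter $a$ occurs at two positions at distance at least $2$ (otherwise each letter occurs at most twice, so $|w|\le 2t$), whence $w$ has a factor $aZa$ with $Z$ nonempty; as $a^R=a$, this is simultaneously an instance of $xyx$ (via $x\mapsto a$, $y\mapsto Z$) and of $xyx^R$. Hence neither is $k$-avoidable for any $k$. (Alternatively, $xyx$ is the Zimin word $Z_2$, hence unavoidable \cite{BEM}.)

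\emph{The avoidable patterns.} For the converse I would prove the combinatorial statement that if $\ell(p)$ is not a prefix of $xyx$ or of $xyx^R$, then $\ell(p)$ has a factor $q$ with $q\sim s$ for some $s\in S_2\cup S_3$. This completes the proof: by Theorems \ref{S_2} and \ref{S_3} the pattern $s$ — and hence, by the $\sim$-invariance lemma, $q$ — is $3$-avoidable; and since $q$ is a factor of the pattern $\ell(p)$, every instance of $\ell(p)$ contains an instance of $q$, so a word avoiding $q$ avoids $\ell(p)$, and therefore $p$. Thus $p$ is $3$-avoidable.

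I would establish the combinatorial statement by peeling off $\ell(p)$ one letter at a time. First, $\ell(p)$ begins with $x$, and its second letter is not $y^R$: were it $y^R$, then $\iota_2\iota_1\iota_2$ (which fixes $x,x^R$ and interchanges $y,y^R$) would send $\ell(p)$ to a lexicographically smaller equivalent word. If the second letter is $x$ or $x^R$ we are done, with factor $xx\in S_3$ or $xx^R\in S_2$; so assume $\ell(p)$ begins with $xy$. If its third letter is $y$ or $y^R$ we are done via the factor $yy\sim xx$ or $yy^R\sim xx^R$; so $\ell(p)$ begins with $xyx$ or $xyx^R$. If $|\ell(p)|=3$ this contradicts the hypothesis, so $|\ell(p)|\ge4$, and the fourth letter finishes the analysis: after $xyx$ the four possibilities give factors $xx$, $xx^R$, $xyxy\in S_3$, $xyxy^R\in S_3$; after $xyx^R$ they give $x^Rx\sim xx^R$, $x^Rx^R\sim xx$, $xyx^Ry\sim xyxy^R$, $xyx^Ry^R\in S_3$. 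In every surviving case $\ell(p)$ has a factor $\sim$-equivalent to a member of $S_2\cup S_3$. (The cases $|\ell(p)|\le2$ are immediate: $\epsilon, x, xy$ are prefixes of $xyx$, while $xx\in S_3$ and $xx^R\in S_2$.)

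The step that needs genuine care — and the main obstacle — is checking that this case tree is exhaustive and that the $\sim$-equivalences it invokes are correct; all are routine except $xyx^Ry\sim xyxy^R$, which one verifies by applying $\iota_1$, then $\iota_3$, then $\iota_2$. It is precisely here that the membership in $S_2\cup S_3$ of the short patterns $xx,xx^R$ and of the length-$4$ patterns $xyxy,xyxy^R,xyx^Ry^R$ is used; the longer patterns of $S_2$ are not needed for this theorem (they serve only the companion characterization of $2$-avoidability).
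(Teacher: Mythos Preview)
Your proof is correct and follows essentially the same approach as the paper: the unavoidability of $xyx$ and $xyx^R$ is shown by a pigeonhole argument producing a factor $aZa$ with $Z$ nonempty, and the converse is established by the same letter-by-letter case analysis on $\ell(p)$, using that $xx,\,xyxy,\,xyxy^R,\,xyx^Ry^R\in S_3$ and $xx^R\in S_2$ (and that $\ell(xyx^Ry)=xyxy^R$). The only cosmetic difference is that the paper phrases the converse as a contrapositive (assuming $p$ is $3$-unavoidable and deducing $|\ell(p)|\le 3$), whereas you argue directly; your closing remark that the longer patterns of $S_2$ play no role here is also consistent with the paper's argument.
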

\begin{proof}To begin with, we note that $xyx$ and $xyx^R$ are unavoidable: If positive integer $k$ is fixed, consider any word $w$ over $T_k$ of length $2k+1$. Some letter  $a\in T_k$ appears in $w$ at least 3 times, and $w$ has a factor $aba$ where $|b|_a\ge 1$. Consider the morphism respecting reversal where $f(x)=a$, $f(y)=b$. Then $f(xyx)=f(xyx^R)=aba$, since $a=a^R$. Thus $w$ contains instances of $xyx$ and $xyx^R$; since $w$ was an arbitrary word over $T_k$, patterns $xyx$ and $xyx^R$ are not $k$-avoidable. Since $k$ was arbitrary,  they are unavoidable. A fortiori, their prefixes are unavoidable.

Now suppose that $p$ is 3-unavoidable.Without loss of generality, replace $p$ by $\ell(p)$. The first letter of $p$ is thus $x$. If $|p|=1$ we are done.
By Theorems~\ref{S_2} and \ref{S_3}, no factor of $p$ is equivalent to $xx$ or $xx^R$; the two-letter prefix of $p$ is thus $xy$ or $xy^R$. Since $p=\ell(p)$, it follows that $xy$ is a prefix of $p$. Therefore, if $|p|=2$, we are done. Since $yy$ and $yy^R$ are equivalent to $xx$ and $xx^R$ respectively, the third letter of $p$ must be $x$ or $x^R$, and one of $xyx$ and $xyx^R$ is a prefix of $p$. If $|p|\le 3$, we are done. If $|p|\ge 4$, then the fourth letter of $p$ must be $y$ or $y^R$; otherwise $p$ ends in a word equivalent to $xx$ or $xx^R$. Now, however, the length 4 prefix of $p$ is one of $xyxy$, $xyxy^R$, $xyx^Ry$ and $xyx^Ry^R$. However, $xyx^Ry$ cannot be a prefix of $p$, since $\ell(xyx^Ry)=xyxy^R$ which is 3-avoidable by Theorem~\ref{S_3}. The other possibilities are also 3-avoidable by Theorem~\ref{S_3}. We conclude that $|p|\le 3$, and our proof is complete.
\end{proof}

\begin{theorem}\label{2-avoidable}
Let $p$ be a binary pattern with reversal. Then $p$ is 2-avoidable if and only if $\ell(u)\in S_2$ for some factor $u$ of $p$.
\end{theorem}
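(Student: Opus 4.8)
The plan is to prove both directions of the biconditional. For the ``if'' direction, suppose $\ell(u)\in S_2$ for some factor $u$ of $p$. By Theorem~\ref{S_2}, $u$ is $2$-avoidable, and since $u$ is a factor of $p$, any word avoiding $u$ also avoids $p$; hence $p$ is $2$-avoidable. This direction is immediate from the $2$-constructions supplied by Theorem~\ref{S_2}, so the real content is the converse.

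For the ``only if'' direction, I would argue the contrapositive: assuming no factor $u$ of $p$ satisfies $\ell(u)\in S_2$, I would show $p$ is not $2$-avoidable. Without loss of generality replace $p$ by $\ell(p)$. The key reduction is that since every factor $u$ of $p$ has $\ell(u)\notin S_2$, and since any word equivalent to an element of $S_2$ that appears as a factor would contradict this (because $\ell$ is invariant on $\sim$-classes and $S_2$ is a set of $\ell$-representatives), the pattern $p$ is heavily constrained letter-by-letter. Concretely, I would carry out a prefix analysis analogous to the proof of Theorem~\ref{unavoidable}: the first letter is $x$; since $xx, xx^R \in S_2$ (and $yy,yy^R\sim xx,xx^R$), the two-letter prefix must be $xy$; since $xyy\sim$ nothing forbidden only if we continue carefully, examine the third letter, and so on. At each stage, most continuations force a factor whose $\ell$-image lies in $S_2$, pruning the tree of possibilities. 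After finitely many steps this should show $p$ must be $\sim$-equivalent to a word drawn from an explicitly short list of ``survivors''—patterns all of whose factors avoid $S_2$—and then one checks directly (or cites a known classical result, e.g. Cassaigne's classification for the reversal-free survivors such as prefixes of $xyx$, $xyx^R$, together with short patterns like $xyz$-type fragments specialized to the binary-with-reversal setting) that each survivor is $2$-unavoidable. The unavoidable ones among these are exactly the prefixes of $xyx$ and $xyx^R$ from Theorem~\ref{unavoidable}; the remaining survivors are $3$-avoidable but not $2$-avoidable, and for those one must exhibit the obstruction over the binary alphabet $T_2$ directly.

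The main obstacle will be the last part: verifying that each ``survivor'' pattern (one not containing any factor with $\ell$-image in $S_2$, yet not a prefix of $xyx$ or $xyx^R$) is genuinely $2$-unavoidable. For patterns without reversal this is covered by Cassaigne's classification \cite{cass}, but the reversal letters introduce extra instances (since $f(x)$ and $f(x)^R$ can coincide on short blocks or on palindromic blocks over $T_2$), so one must check that every binary word of sufficient length contains an instance. I expect this to reduce to a finite computation: for each survivor pattern $q$, show there is a length $N_q$ such that every word in $T_2^{N_q}$ contains an instance of $q$—a finite check, since there are only $2^{N_q}$ words to examine, and $N_q$ can be bounded a priori by a pigeonhole argument on repeated factors over the two-letter alphabet. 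I would organize the write-up so that the combinatorial pruning (reducing to the survivor list) is done by hand, and the unavoidability of each survivor is either cited or dispatched by a short uniform argument, with the genuinely new binary constructions deferred to Section~\ref{2-constructions} where Theorem~\ref{S_2} is proved.
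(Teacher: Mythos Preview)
Your overall strategy matches the paper's: for the ``if'' direction invoke Theorem~\ref{S_2}, and for the converse compute the finite set of ``survivors'' (patterns $q=\ell(q)$ none of whose factors has $\ell$-image in $S_2$), then verify each survivor is $2$-unavoidable by a finite search. The paper formalizes this via the sets $A_n=\{q:|q|=n,\ q=\ell(q),\ \ell(u)\notin S_2\text{ for every factor }u\text{ of }q\}$, computes $A_0,\ldots,A_5$ explicitly, observes $A_6=\emptyset$, and then dispatches $\bigcup_{i=0}^5 A_i$ by backtracking.

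However, your prefix analysis contains a concrete error that would derail the computation: you assert that $xx\in S_2$, but in fact $xx\notin S_2$ (it lies in $S_3$; only $xx^R$ is in $S_2$). Consequently the two-letter prefix of a survivor need \emph{not} be $xy$: the word $xx$ is itself a survivor, and so are $xxy$, $xxyx$, $xxyx^R$, $xxyy$, and at length five $xxyxx$, $xxyxy$, $xxyx^Rx^R$. The pruning tree is thus wider and deeper than your sketch suggests, and the final $2$-unavoidability check must cover all survivors up to length $5$, not merely the prefixes of $xyx$ and $xyx^R$ together with a few stragglers. Once this slip is repaired, your argument is exactly the paper's.
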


\begin{proof}
By Theorem~\ref{S_2}, if $\ell(u)\in S_2$ for some factor $u$ of $p$, then $\ell(u)$, hence $u$, hence $p$ is 2-avoidable. 

In the other direction, suppose that for all factors $u$ of $p$, $\ell(u)\not\in S_2$. We show that $p$ is 2-unavoidable.
For each non-negative integer $n$, let $A_n$ be defined by
$$A_n=\{q:|q|=n, q = \ell(q), \mbox{ and if $u$ is a factor of $q$ then }\ell(u)\notin S_2\}.$$
If $q$ is in $A_n$, $n>0$, write $q'$ for the prefix of $q$ of length $n-1$. Then $\ell(q')\in A_{n-1}$. Thus, $q=ra$, where $r\in C(\hat{r})$, some $\hat{r}\in A_{n-1}$, $a\in\Sigma$. This allows us to compute the $A_n$:

\begin{eqnarray*}
A_0&=&\{\epsilon\}\\
A_1&=&\{x\}\\
A_2&=&\{xx, xy\}\\
A_3&=&\{xxy,xyx, xyx^R\}\\
A_4&=&\{xxyx,xxyx^R,xxyy, xyxy,xyxy^R,xyx^Ry^R,xyyx\}\\
A_5&=&\{xxyxx,xxyxy,xxyx^Rx^R\}\\
A_6&=&\phi.
\end{eqnarray*}
It follows that $A_n=\phi$, $n\ge 6.$

We have $\ell(p)\in A_{|p|}\subseteq \bigcup_{i=0}^\infty A_i=\bigcup_{i=0}^5 A_i.$ A backtracking algorithm shows that elements of $\bigcup_{i=0}^5 A_i$ are all 2-unavoidable. It follows that $p$ is 2-unavoidable.
\end{proof}

\section{Binary patterns with reversal that are 3-avoidable}\label{3-constructions}
In this section we will prove Theorem~\ref{S_3}. A {\bf square} is an instance of $xx$. It was shown by Thue \cite{thue} that squares are 3-avoidable. Any instance of $xyxy$ is necessarily a square. Therefore, both $xx$ and $xyxy$ are 3-avoidable. To prove Theorem~\ref{S_3}, it thus remains to show that $xyxy^R$ and $xyx^Ry^R$ are 3-avoidable.

Fraenkel and Simpson \cite{FS} constructed a binary sequence containing no squares other than 00, 11 and 0101. We will refer to this sequence as ${\bf f}$. 
\begin{theorem} Patterns $xyxy^R$ and $xyx^Ry^R$ are 3-avoidable.
\end{theorem}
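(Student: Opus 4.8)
The plan is to exhibit, for each of the two patterns, an explicit $3$-avoiding infinite word, taking advantage of the Fraenkel--Simpson word $\mathbf f$ over $\{0,1\}$ which contains no squares other than $00$, $11$, and $0101$. The strategy is to find a coding or morphic image of $\mathbf f$ into a ternary alphabet that destroys all instances of the target pattern; since $\mathbf f$ already controls the square structure tightly, the main task is to analyze what an instance of $xyxy^R$ or $xyx^Rx^Ry^R$ (more precisely $xyx^Ry^R$) would force upon a preimage factor in $\mathbf f$, and to arrange the coding so that such a configuration cannot occur.

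Concretely, for $xyxy^R$ I would first observe that an instance $XYXY^R$ begins with a square $XYXY$ only when $|Y|=|X|$ fails, so instead I would work directly: an occurrence of $XYXY^R$ in a word $w$ over $T_3$ is governed by the positions where a block $X$ repeats with the material between and after the two copies of $X$ being a reversed pair $Y,Y^R$. I would push this back through the coding $\tau:\mathbf f\mapsto \mathbf w$: if $\tau$ is chosen to be a letter-to-letter coding (or a uniform morphism of small length), an instance of $XYXY^R$ in $\mathbf w$ with $|X|,|Y|$ not both tiny lifts to a near-square in $\mathbf f$, which by the Fraenkel--Simpson property must be one of $00$, $11$, $0101$ or a short word; these finitely many short cases are then killed by choosing the images of $0$ and $1$ (and the coding) appropriately, checking a bounded prefix of $\mathbf w$ by hand or by a finite automaton/backtracking search. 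The pattern $xyx^Ry^R$ is handled in parallel: an instance $XYX^RY^R$ is a word $v$ equal to its own "twisted reversal" in the sense that reading $v$ backwards gives $Y X Y^R X^R$ of the same block lengths, so $v$ contains the palindromic-type constraint $X\cdots X^R$; again pull this back to $\mathbf f$ and use that $\mathbf f$ has essentially no long repetitions of either square or near-palindrome type beyond the listed exceptions.

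The key steps, in order, are: (1) fix the coding/morphism $\tau$ from $\mathbf f$ to $T_3^*$ and let $\mathbf w=\tau(\mathbf f)$; (2) suppose $\mathbf w$ contains an instance of the pattern and let $XYXY^R$ (resp.\ $XYX^RY^R$) be a shortest one, so in particular $\mathbf w$ has no shorter instance and we may assume the occurrence is not contained in the image of a single letter; (3) use the structure of $\tau$ to recover from the repeated block $X$ an (almost) repetition in $\mathbf f$ whose length is comparable to $|X|$, and invoke the Fraenkel--Simpson property to bound $|X|$; (4) symmetrically bound $|Y|$; (5) conclude that any instance is short, and eliminate the finitely many short instances by a direct check of a sufficiently long prefix of $\mathbf w$. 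The same four-step template, with "square" replaced by the appropriate reversal-type overlap, disposes of $xyx^Ry^R$.

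I expect the main obstacle to be step (3)/(4): translating an instance of a pattern \emph{with reversal} back through $\tau$ into a repetition statement about $\mathbf f$. Reversal does not commute with a morphism in general ($\tau(u)^R\ne\tau(u^R)$ unless $\tau$ respects reversal, which a coding into $T_3$ typically will not), so the reversed block $Y^R$ in $\mathbf w$ does \emph{not} obviously come from a reversed factor of $\mathbf f$. The fix is either to choose $\tau$ with a symmetry property (for instance images of $0,1$ that are palindromes, or a coding that intertwines reversal on $\mathbf f$ with reversal on $\mathbf w$), or to argue more carefully at the level of synchronization points of the morphism so that the two copies of $X$ in $\mathbf w$ align with genuine factor-occurrences in $\mathbf f$, after which the reversal on $Y$ is only used to control a bounded amount of boundary material rather than a long factor. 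Getting this alignment argument clean — and making sure the short-case check at the end is genuinely finite and correct — is where the real work lies; everything else is bookkeeping built on Thue's and Fraenkel--Simpson's constructions.
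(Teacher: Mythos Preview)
Your outline correctly identifies the right starting point (the Fraenkel--Simpson word~$\mathbf f$) and correctly locates the real difficulty: reversal does not commute with a generic morphism, so pulling the block $Y^R$ (or $X^R$) back to $\mathbf f$ is exactly the step that does not go through by synchronization or by bounding $|X|$ alone. But you have not actually solved this step; you have only listed possible workarounds (palindromic images, alignment at synchronization points) without committing to one or arguing that it succeeds. As stated, the plan stalls precisely where you say it might.

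The paper's proof bypasses this obstacle entirely by a different, much sharper idea. It builds from $\mathbf f$ a ternary word $\mathbf g$ (by inserting $222$ into every factor $10$) with the property that \emph{no length-two factor $cd$ of $\mathbf g$ satisfies $c\equiv d+1\pmod 3$}; in other words, consecutive letters in $\mathbf g$ only step ``forward'' modulo~$3$. Consequently, any factor $z$ of $\mathbf g$ containing two distinct letters has $z^R$ \emph{not} a factor of $\mathbf g$. This immediately forces $Y$ (and, for the second pattern, also $X$) to be a power of a single letter, so $Y=Y^R$ and $X=X^R$, and both patterns collapse to the ordinary square $xyxy$. One then checks that $\mathbf g$ has no square $uu$ with $|u|\ge 2$ by deleting the $2$'s and invoking the Fraenkel--Simpson property of $\mathbf f$. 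There is no lifting of reversed blocks, no palindromic coding, and no finite case-check of short instances: the orientation trick kills the reversal outright. This is the missing idea in your proposal.
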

\begin{proof}
From ${\bf{f}}$, create a word ${\bold{g}}$ by replacing each 
factor $10$ of ${\bf f}$ by $12220$. 
Word ${\bold{g}}$ has the form ${\bf g}=0^{a_1} 1^{a_2} 2^3 0^{a_3} 1^{a_4} 2^3\cdots$ where for each $i$, $1\le a_i\le 3$, since neither of $0^4 =(00)^2$ and $1^4=(11)^2$ can be a factor of ${\bf f}$.  In particular, ${\bold{g}}$ has no length 2 factor $cd$ 
where $c \equiv d + 1 $ (mod~3). Note also that word ${\bold{f}}$ never contains 
$1010$,  so that ${\bf g}$ never contains $220122201$ or $012220122$.

Suppose that $xyxy^R$, (resp., $xyx^Ry^R$) is a factor of ${\bold{g}}$. Then so is $xyxy$: Any factor $z$ of ${\bold{g}}$ containing distinct letters has a factor 
$dc$ where $d \equiv c + 1 $ (mod~3); thus $z^R$ has a length 2 factor $cd$ 
where $c \equiv d + 1$  (mod~3), so that $z^R$  cannot be a factor of ${\bold{g}}$. Since  both $y$ and $y^R$ (resp., $x$, $x^R$, $y$ and $y^R$) are factors of ${\bold{g}}$, then $y$ (resp., $x$, $y$) must be a power of a single letter, so that $y = y^R$ (resp., $x=x^R$, $y=y^R$).

Thus ${\bold{g}}$ has a factor $xyxy$, which is equivalent to having a factor 
$xx$ with $|x|\geq 2$. We show that ${\bold{g}}$ has no such factor: Suppose ${\bold{g}}$ has factor $xx$ with $|x|\geq 2$. Word $x$ must contain $2$ distinct letters, otherwise 
$xx$ consists of a letter repeated four or more times, contradicting $a_i \leq 3$. This implies that all three of $0, 1, 2$ appear in $xx$. Deleting $2$'s from 
$xx$ leaves a square over $\{0,1\}$ containing both $0$ and $1$. This must be $0101$. Then, adding the $2$'s back in, $xx$ is a factor of $201222012$; however 
the only square factor of $201222012$  is $22$, and $|x|\geq 2$. This is a contradiction. 

In conclusion, $xyxy^R$, and $xyx^Ry^R$ 
are avoided by ${\bold{g}}$, and are thus $3$-avoidable. 
\end{proof}

\section{Binary patterns with reversal that are 2-avoidable}\label{2-constructions}

In this section we will prove Theorem~\ref{S_2} using several new constructions as well as some known results. We partition $S_2$ into pieces according to the constructions used: $S_2=\bigcup_{i=1}^4 S_{2,i}$ where
\begin{eqnarray*}
S_{2,1}&=&\{xxx,xxyxyy, xxyyx,xyxxy,xyxyx\}\\
S_{2,2}&=&\{xyxyx^R\}\\
S_{2,3}&=&\{ xxyxy^R,xxyx^Ry,xxyx^Ry^R,xxyyx^R,xx^R,xyx^Rx^Ry,xyyx^R \}\\
S_{2,4}&=&\{xyxy^Rx^R,xyx^Ry^Rx,xyxy^Rx,xyx^Ryx\}.
\end{eqnarray*}
\begin{theorem}
The words of $S_{2,1}$ are 2-avoidable.
\end{theorem}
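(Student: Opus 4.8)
The statement to prove is that every word of $S_{2,1}=\{xxx,xxyxyy,xxyyx,xyxxy,xyxyx\}$ is 2-avoidable, i.e. admits an infinite binary word avoiding it.

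My plan is to exhibit a single binary construction, or a small number of them, and verify avoidance of each pattern. The natural candidate is the image of the Thue–Morse word (or some other well-understood binary word) under a suitable binary morphism, or perhaps the fixed point of a uniform binary morphism chosen so that its language is sufficiently restricted. First I would recall that $xxx$ is just the cube, and cubes are 2-avoidable by Thue; in fact the Thue–Morse word $\mathbf{t}$ is cube-free (indeed overlap-free), so $\mathbf{t}$ avoids $xxx$. The remaining four patterns $xxyxyy$, $xxyyx$, $xyxxy$, $xyxyx$ each contain $xx$ as a factor, so any instance of them contains a square; hence any square-free binary word would avoid all of them — but binary square-free words do not exist, so that shortcut fails and we must work harder. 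The key observation is that each of $xxyxyy$, $xxyyx$, $xyxxy$, $xyxyx$ forces a square $XX$ together with extra structure, and for short $X$ (length $1$) the instances become words like $aabab\,b$, $aab\,b\,a$, $ab\,aa\,b$, $abab\,a$ over a binary alphabet, which a word with sufficiently restricted factor set can avoid; for longer $X$ one leverages that the construction has only finitely many square types.

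The concrete approach I would take: apply a well-chosen binary morphism $h$ to an overlap-free (or cube-free) word and analyze the squares that can occur in $h(\mathbf{t})$. The model here is exactly the proof of Theorem~\ref{S_3} just given, where $\mathbf{f}$ (Fraenkel–Simpson) has only the squares $00,11,0101$, and then a recoding kills the problematic instances. For $S_{2,1}$, since we are forced to allow some squares, I would pick a morphism whose image has a completely classified, short list of square factors — for instance a word in which every square has period $1$ or period $2$ (like $\mathbf{f}$ itself, whose squares are $00$, $11$, $0101$). Then for each of the five patterns I would argue: an instance $XYXYY$, $XXYYX$, etc., of the pattern would in particular contain a square $XX$ with $X$ on our short list, and then a short case analysis on $|X|\in\{1,2\}$ and on the letters of $X$ and $Y$ shows the rest of the pattern cannot be completed in the word — e.g. if $X=0101$ then $Y$ would have to begin and end in ways incompatible with the allowed factors, and if $|X|=1$ then the pattern degenerates into a short word with a cube or an incompatible square that the word also avoids.

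The main obstacle I anticipate is pattern $xyxyx$: an instance $XYXYX$ contains $(XY)(XY)$, a square of period $|XY|$, \emph{and} $(YX)(YX)$, so controlling it via the square-classification lemma requires also that $XYXYX$ itself — which contains $XYXY$, already a square — be ruled out, and the short-$X$ case $ababa$ is a genuine pattern (not just a power of a letter) that the underlying word might actually contain. So for $xyxyx$, and possibly $xyxxy$ and $xxyxyy$, I expect the clean "image of an overlap-free word" argument to need either a different base word, a second morphism, or an explicit ad hoc check; the plan is to handle $xxx$ by Thue–Morse, group $xxyyx$ with the Fraenkel–Simpson-style argument, and treat $xyxyx$, $xyxxy$, $xxyxyy$ by whichever of these three refinements turns out to be cleanest, most likely a single tailored uniform binary morphism applied to a cube-free ternary (or overlap-free binary) word whose image provably avoids all five simultaneously.
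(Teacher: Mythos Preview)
The paper's proof is a one-line citation: since the patterns in $S_{2,1}$ contain no reversal symbols, they are ordinary binary patterns, and their 2-avoidability is already established in the literature by Thue, Roth, and Cassaigne. You are proposing to re-prove these classical results from scratch, which is a legitimate but much longer route, and your proposal as written is a plan rather than a proof: the treatment of $xxyyx$, $xyxxy$, and $xxyxyy$ is left to ``whichever of these three refinements turns out to be cleanest,'' with no construction actually verified.

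There is also a specific oversight worth flagging. You single out $xyxyx$ as the ``main obstacle,'' but in fact it is the easiest of the remaining four: any instance $XYXYX$ has period $|XY|$ and length $2|XY|+|X|>2|XY|$, so it is an overlap, and the Thue--Morse word $\mathbf{t}$ avoids it directly (the paper itself states this in Section~\ref{S_{2,4}}: ``$\mathbf{t}$ avoids overlaps, i.e., instances of $xxx$ or $xyxyx$''). So $\mathbf{t}$ alone handles $xxx$ and $xyxyx$. The genuinely nontrivial cases are $xxyyx$, $xyxxy$ (due to Cassaigne) and $xxyxyy$ (covered by Roth's theorem that every length-six binary pattern is 2-avoidable); if you want a self-contained argument you must actually exhibit and verify constructions for these, and the Fraenkel--Simpson word $\mathbf{f}$ will not do for all of them (for instance $\mathbf{f}$ contains $0101\cdot 0$, an instance of $xxyyx$ with $X=01$, $Y=0$ --- wait, that needs $XXYYX=0101\,00\,01$, which requires the factor $01010001$; one must check). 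The point is that your square-classification heuristic requires real case-work that you have not done, whereas the paper simply defers to the existing literature.
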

\begin{proof}These patterns, which are ordinary binary patterns, i.e., words over $\{x,y\}$, were shown to be 2-avoidable by Thue\cite {thue}, Roth\cite{roth} and Cassaigne \cite{cass}.
\end{proof}

\begin{theorem}
The sequence ${\bf f}$ of Fraenkel and Simpson avoids $xyxyx^R$.
\end{theorem}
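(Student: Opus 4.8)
The plan is to argue directly from the known square structure of $\mathbf{f}$. Recall that $\mathbf{f}$ contains no squares other than $00$, $11$ and $0101$. Suppose, for contradiction, that some factor of $\mathbf{f}$ is an instance of $xyxyx^R$, say $XYXYX^R$ with $X,Y$ nonempty. The first observation is that $XYXY$ is a square, namely $(XY)^2$, so it must be one of the three permitted squares; since $|XY|\ge 2$, the only possibility is $XYXY=0101$, forcing $\{X,Y\}=\{0,1\}$ (in some order) and hence $|X|=1$. But then $X=X^R$, so the factor $XYXYX^R=XYXYX$ has length $5$. So the claim reduces to showing that $\mathbf{f}$ contains no factor of the form $ababa$ with $a\ne b$ letters of $T_2$, i.e.\ neither $01010$ nor $10101$.

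Now $ababa$ contains $(ab)^2=abab$ as a prefix and $(ba)^2=baba$ as a suffix. We already know $abab$ can occur in $\mathbf{f}$ only as $0101$, so the candidate is $01010$; but $01010$ has suffix $1010=(10)^2$, a square of length $4$ different from $00$, $11$, $0101$, contradicting the defining property of $\mathbf{f}$. Symmetrically $10101$ has prefix $1010$, again forbidden. Hence no such factor exists, and $\mathbf{f}$ avoids $xyxyx^R$.

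The only point that needs a little care — and the one place I expect to have to be explicit — is the reduction showing that an instance of $xyxyx^R$ in $\mathbf{f}$ must have $|X|=1$ and therefore collapse to an instance of $xyxyx$; this is where the hypothesis on $\mathbf{f}$'s squares is used twice (once to pin down $XYXY=0101$, once to rule out $01010$ and $10101$), but each step is immediate from the classification of squares in $\mathbf{f}$, so no real obstacle arises.
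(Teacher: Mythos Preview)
Your proof is correct and follows essentially the same route as the paper: both observe that $XYXY$ must be the unique length-$\ge 4$ square $0101$, then derive the forbidden factor $1010$. One minor tightening: $XYXY=0101$ already forces $X=0$, $Y=1$ (not merely $\{X,Y\}=\{0,1\}$), so the case $10101$ need not be considered.
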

\begin{proof}
Suppose $XYXYX^R$ is an instance of $xyxyx^R$ in ${\bf f}$, where $X$, $Y\in T_2^+$. It follows that $XYXY$ is a square of length at least 4; as there is only one such square in ${\bf f}$, this forces $X=0$, $Y=1$. However, in this case ${\bf f}$ contains the factor $YXYX^R=1010$, which is impossible.
\end{proof}

To prove Theorem~\ref{S_2}, it remains to show that the patterns of $S_{2,3}$ and $S_{2,4}$ are 2-avoidable. We do this in Sections~\ref{S_{2,3}} and \ref{S_{2,4}}, respectively.

\subsection{Patterns in $S_{2,3}$ are 2-avoidable.}\label{S_{2,3}}
We use here elementary notions of graph theory; in particular, a graph has a 2-colouring if and only if it has no odd cycles. A standard reference is by Wilson \cite{wils}. Let $p$ be a binary pattern with reversal. We use the notation
$$ a^R=\begin{cases}
x&\mbox{ if }a=x^R\\
y&\mbox{ if }a=y^R
\end{cases}.$$
 Define $G(p)$ to be the graph with vertex set $\Sigma$, and an edge between $a^R$ and $b$ whenever $ab$ is a length two factor of $p$. \vspace{.1in}

\noindent{\bf Example: }If $p= x^Rxyx^Rx^Ry$, then the length two factors are $x^Rx$, $xy$, $yx^R$, $x^Rx^R$ and $x^Ry$, giving rise to edges $xx$, $x^Ry$, $y^Rx^R$, $xx^R$ and $xy$. The graph $G(p)$ is shown in Figure~1. This graph contains odd cycles, for example, $x$--$x$, of length 1, and $x$--$x^R$--$y$--$x$, of length 3.\vspace{.1in}
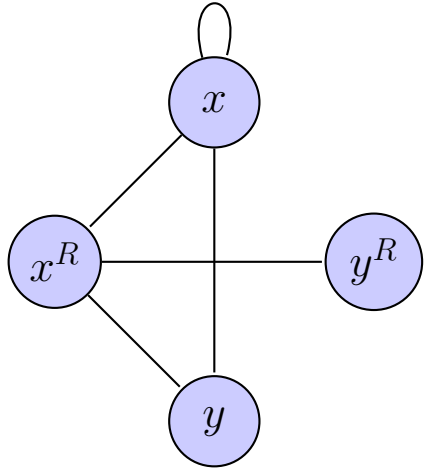
\begin{figure}
\caption{The graph $G(p)$, where $p=x^Rxyx^Rx^Ry$.}
\begin{tikzpicture}[-,shorten >=1pt,auto,node distance=3cm,
  thick,main node/.style={circle,fill=blue!20,draw,font=\sffamily\Large\bfseries},minimum size=1.2cm,every loop/.style={}]

  \node[main node] (1) {$x$};
  \node[main node] (2) [below left of=1] {$x^R$};
  \node[main node] (3) [below right of=2] {$y$};
  \node[main node] (4) [below right of=1] {$y^R$};

  \path[every node/.style={font=\sffamily\small}]
    (1)  edge  node[left] {} (2)
        edge node[left] {} (3)
        edge [loop above] node {} (1)
    (2)  edge node {} (4)
       edge node[left] {} (3);
 
\end{tikzpicture}
\end{figure}

\begin{theorem}\label{alternating}
Let $p\in\{x,x^R,y,y^R\}^*$. An instance of $p$ appears in $(01)^\omega$ if and only if $G(p)$ is bipartite.
\end{theorem}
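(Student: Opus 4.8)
The plan is to analyze directly what it means for an instance of $p=a_1a_2\cdots a_n$ (with each $a_i\in\{x,x^R,y,y^R\}$) to occur in the periodic word $(01)^\omega$. A factor of $(01)^\omega$ is determined by its starting parity, and crucially, every factor of $(01)^\omega$ has the property that its reversal is again a factor of $(01)^\omega$ \emph{of the same length but with the opposite starting parity} when the length is odd, and the \emph{same} starting parity when the length is even. So for a morphism $f$ respecting reversal, the image $f(x)$ sits in $(01)^\omega$ starting at some parity, and $f(x^R)=f(x)^R$ sits in $(01)^\omega$ starting at a parity determined by that of $f(x)$ together with the parity of $|f(x)|$. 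The key bookkeeping is: assign to each of the four letters $x,x^R,y,y^R$ a "parity variable" recording where its image begins. First I would set up the rule that when block $B$ begins at parity $\varepsilon$, the next block begins at parity $\varepsilon + |B| \pmod 2$; and that the parity at which $B^R$ begins equals $\varepsilon + |B| + 1$ if we want $B^R$ to also be a factor starting fresh — more precisely, $B^R$ starts at parity $\varepsilon'$ where $\varepsilon'$ is forced so that $B^R$ is genuinely a factor, and one computes $\varepsilon' \equiv \varepsilon \pmod 2$ exactly when $|B|$ is even.

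The second step is to translate "$f(p)$ is a factor of $(01)^\omega$" into a consistency condition. Walking along $p = a_1\cdots a_n$, the image block $f(a_i)$ must start exactly where $f(a_1)\cdots f(a_{i-1})$ ends. This gives, for each length-two factor $a_ia_{i+1}$ of $p$, a constraint relating the starting parities of $f(a_i)$ and $f(a_{i+1})$. The cleanest way to encode it: introduce for each letter $c\in\{x,x^R,y,y^R\}$ a variable $\sigma(c)\in\{0,1\}$ equal to the parity at which $f(c)$ starts, subject to the reversal relations $\sigma(x^R)$ vs. $\sigma(x)$ and $\sigma(y^R)$ vs. $\sigma(y)$. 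I expect the cleanest reformulation to be in terms of the "opposite-endpoint" parities, which is exactly why the graph $G(p)$ puts an edge between $a^R$ and $b$ rather than between $a$ and $b$: applying the reversal antimorphism turns a concatenation constraint $a_i \to a_{i+1}$ into a constraint on $a_i^R$ and $a_{i+1}$. The upshot is that a consistent choice of parities corresponds precisely to a proper $2$-colouring of $G(p)$: the two colour classes are the letters whose image blocks "start at an even position" versus "start at an odd position" (after the reversal normalization), and each edge of $G(p)$ forces the two endpoints into opposite classes. Conversely, given a $2$-colouring one builds an explicit morphism — e.g. send letters of one class to $01$ and of the other to $10$, or to single letters as needed — and checks the image is a genuine factor of $(01)^\omega$, using that lengths can be taken equal to $1$ or $2$ as convenient, so parity constraints are the only obstruction.

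For the converse direction (bipartite $\Rightarrow$ instance exists) the construction should be: take a proper $2$-colouring $\chi:\Sigma\to\{0,1\}$ of $G(p)$, and define $f(x)$, $f(y)$ (hence $f(x^R)$, $f(y^R)$) to be length-one or length-two words over $\{0,1\}$ chosen so that: $f$ respects reversal (automatic if we keep lengths odd, i.e. length one, where a block equals its own reversal — but then parity cannot change, so we may instead need length-two blocks $01$/$10$ which are each other's reversals), and so that the starting positions chain up correctly. One must verify the colouring condition from $G(p)$ is exactly what makes this chaining possible, handling the self-loop case ($aa$ a factor, forcing an odd cycle of length $1$, matching that $f(a)f(a)$ can never be a factor of $(01)^\omega$ unless $f(a)$ alternates, and $f(a)f(a)$ then has a repeated letter at the seam).

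The step I expect to be the main obstacle is getting the reversal/parity arithmetic exactly right — in particular, verifying that the specific incidence rule defining $G(p)$ (edge between $a^R$ and $b$ for each factor $ab$) is the correct encoding, including the degenerate cases where $a=b^R$ or $a=a^R$ type coincidences produce loops or multi-edges, and confirming that in the "if" direction one can always realize the colouring by an \emph{honest} non-erasing reversal-respecting morphism whose image lands in $(01)^\omega$. Once the dictionary "valid morphism $\leftrightarrow$ proper $2$-colouring" is nailed down, the theorem follows since a graph is bipartite iff it has no odd cycle iff it is $2$-colourable.
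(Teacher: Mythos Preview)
Your approach is correct and essentially identical to the paper's: you assign to each letter of $\Sigma$ the first letter (equivalently, starting parity) of its image, observe that the concatenation constraint on a factor $ab$ of $p$ becomes a ``different colour'' constraint between $a^R$ and $b$ --- precisely the edge rule of $G(p)$ --- and realise a given $2$-colouring via a morphism with images of length $1$ or $2$. The paper's write-up streamlines the bookkeeping by noting directly that for factors $u,v$ of $(01)^\omega$ the word $uv$ is again a factor iff $u^R$ and $v$ begin with different letters, and by taking $f(x)$ to be the shortest word starting with $c(x)$ and ending with $c(x^R)$ (so $|f(x)|\in\{1,2\}$ and $f$ automatically respects reversal), which spares you the length-parity arithmetic you flagged as the main obstacle.
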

\begin{proof} Let $u$ and $v$ be factors of $(01)^\omega$. Then $uv$ is a factor of $(01)^\omega$ exactly when $u^R$ and $v$ begin with different letters.
Suppose $G(p)$ is bipartite, and let $c:G(p)\rightarrow\{0,1\}$ be a legal colouring. Now let $X$ be the shortest string beginning with $c(x)$ and ending with $c(x^R)$; thus $X$ is a factor of $(01)^\omega$ with $1\le |X|\le 2$. Similarly, let $Y$ be the shortest string beginning with $c(y)$ and ending with $c(y^R)$. 

 Define the morphism $h:\{x,y\}\rightarrow\{0,1\}^*$ by $h(x)=X$, $h(y)=Y$. If $a\in\{x,x^R,y,y^R\}$, then $h(a)$ begins with  $c(a)$ and ends in $c(a^R)$.
Suppose $ab$ is a length two factor of $p$. Then $a^Rb$ is an edge of $G(p)$, and $c(a^R)\ne c(b)$ so that $(h(a))^R$ and $h(b)$ begin with different letters. It follows that $h(ab)$ is a factor of $(01)^\omega$. By induction, we see that $h(p)$ is a factor of $(01)^\omega$.

In the other direction, suppose that $h:\{x,y\}\rightarrow\{0,1\}^*$ is a morphism such that $h(p)$ is a factor of $(01)^\omega$. For $a\in\{x,x^R,y,y^R\}$, define the 2-colouring $c$ by choosing $c(a)$ to be the first letter of $h(a)$. If this is not a legal colouring, then for some letters $a,b\in\{x,x^R,y,y^R\}$, there is an edge $ab$ in $G(p)$ with $c(a)=c(b)$. This implies that $a^Rb$ is a factor of $p$, but $h(a)$ and $h(b)$ start with the same letter. Then $h(a^R)$ ends with the same letter that begins $h(b)$, forcing 00 or 11 to be a factor of $h(p)$, which is in turn a factor of $(01)^\omega$. This is impossible.
\end{proof}
 \begin{corollary} Every pattern in $S_{2,3}$  is avoided by $(01)^\omega$.
\end{corollary}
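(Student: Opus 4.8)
The plan is to use Theorem~\ref{alternating} in contrapositive form. That theorem says an instance of a pattern $p\in\{x,x^R,y,y^R\}^*$ appears in $(01)^\omega$ exactly when $G(p)$ is bipartite, so $(01)^\omega$ avoids $p$ exactly when $G(p)$ is \emph{not} bipartite, i.e.\ when $G(p)$ contains an odd cycle. Since every $p\in S_{2,3}$ is a word over $\Sigma$, the theorem applies to it, and since $G(p)$ has only the four vertices $x,x^R,y,y^R$, an odd cycle in it is either a loop or a triangle. So what I would actually do is, for each of the seven patterns $p\in S_{2,3}$, write down $G(p)$ and point to a loop or a triangle inside it.

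This reduces the corollary to a short finite check. First, for $p=xx^R$ the only length-two factor is $xx^R$, which, by the rule that a factor $ab$ of $p$ contributes the edge $\{a^R,b\}$, gives a loop at $x^R$; so $G(xx^R)$ already contains an odd cycle. For the remaining six patterns I would list the length-two factors, read off the corresponding edges of $G(p)$, and exhibit a triangle. For example, $xxyx^Ry$ has length-two factors $xx$, $xy$, $yx^R$, $x^Ry$, hence edges including $\{x^R,x\}$, $\{x^R,y\}$ and $\{x,y\}$, so $x^R$--$x$--$y$--$x^R$ is an odd cycle; and $xyyx^R$ has factors $xy$, $yy$, $yx^R$, hence edges $\{x^R,y\}$, $\{y^R,y\}$, $\{y^R,x^R\}$, giving the triangle $x^R$--$y$--$y^R$--$x^R$. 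The remaining four patterns $xxyxy^R$, $xxyx^Ry^R$, $xxyyx^R$ and $xyx^Rx^Ry$ are dispatched identically; in each one a triangle on three of the four vertices of $G(p)$ is visible at a glance.

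The only thing to be careful about — and this is as close as the argument comes to a genuine obstacle — is the translation from factors of $p$ to edges of $G(p)$: the edge attached to a factor $ab$ joins $a^R$ and $b$, not $a$ and $b$, and when $a^R=b$ it is a loop. Once $G(p)$ is recorded correctly for each $p\in S_{2,3}$, the presence of an odd cycle, and hence the fact that $(01)^\omega$ avoids $p$, is immediate, and there is no further difficulty.
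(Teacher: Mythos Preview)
Your proposal is correct and follows exactly the same approach as the paper: apply Theorem~\ref{alternating} in contrapositive, note that $G(xx^R)$ has a loop at $x^R$, and observe that $G(p)$ contains a triangle for each of the remaining six patterns in $S_{2,3}$. The paper's proof is in fact terser than yours---it simply asserts the loop and the triangles without listing edges---so your worked examples for $xxyx^Ry$ and $xyyx^R$ add welcome detail rather than diverging from the intended argument.
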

\begin{proof} Graph $G(xx^R)$ contains the loop $x^R$--$x^R$, i.e., a 1-cycle. For each of the other patterns $p\in S_{2,2}$, $G(p)$ contains a triangle.
\end{proof}
\subsection{The patterns of $S_{2,4}$ are 2-avoidable.}\label{S_{2,4}}

The Thue-Morse word is the fixed point ${\bf t}=h^\omega(0)$, of the binary morphism $h$ given by $h(0)=01$, $h(1)=10$. 
Thue \cite{thue} showed that ${\bf t}$ avoids {\bf overlaps}, i.e., instances of $xxx$ or $xyxyx$.

Suppose $f$ is any non-erasing binary morphism such that $f(0)=0$. Let ${\bf w}=f({\bf t})$.
\begin{lemma}\label{bounding search}
Let $u$ be a factor of ${\bf w}$. Then $u$ is a factor of $f(v)$, for some factor $v$ of ${\bf t}$ with $$|v|\le {2\over |f(1)|+1}(|u|+3|f(1)|-3).$$
\end{lemma}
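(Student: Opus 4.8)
The statement bounds how long a factor $v$ of $\mathbf{t}$ we must look at in order to be sure that a given factor $u$ of $\mathbf{w}=f(\mathbf{t})$ arises inside $f(v)$. The plan is to locate $u$ inside an occurrence of $f(\mathbf{t})$, and then take $v$ to be the shortest factor of $\mathbf{t}$ whose image under $f$ ``brackets'' that occurrence of $u$, and estimate $|v|$ in terms of $|u|$.

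First I would set $m=|f(1)|$ and note that since $f$ is non-erasing binary with $f(0)=0$, every block $f(a)$ with $a\in\{0,1\}$ has length between $1$ and $m$ (indeed $|f(0)|=1$ and $|f(1)|=m$). Write $\mathbf{t}=t_1t_2t_3\cdots$ and $\mathbf{w}=f(t_1)f(t_2)f(t_3)\cdots$. A given occurrence of $u$ in $\mathbf{w}$ begins inside the block $f(t_i)$ for some $i$ and ends inside the block $f(t_j)$ for some $j\ge i$; take $v=t_i t_{i+1}\cdots t_j$, so that $u$ is a factor of $f(v)$ by construction. It remains to bound $|v|=j-i+1$.

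The counting step is the heart of the argument. The $j-i+1$ consecutive blocks $f(t_i),\dots,f(t_j)$ together have total length $|f(v)|$, and $u$ sits inside $f(v)$ while possibly omitting a proper prefix of $f(t_i)$ and a proper suffix of $f(t_j)$; each of those omitted pieces has length at most $m-1$. Hence $|f(v)|\le |u|+2(m-1)$. On the other hand I want a lower bound on $|f(v)|$ in terms of $j-i+1$: among any two consecutive letters of $\mathbf{t}$ at least one is a $1$ (since $\mathbf{t}$ avoids $00$, being overlap-free), so among the letters $t_i,\dots,t_j$ at least $\lceil (j-i+1)/2\rceil \ge (j-i)/2 + 1/2$ of them equal $1$; adding $1$ for each of the remaining (at most) letters being a $0$-block of length $1$ is not even needed, but it gives $|f(v)| \ge m\cdot\frac{j-i+1}{2}$ roughly, and more carefully $|f(v)|\ge \frac{(m+1)(j-i+1)}{2} - c$ for a small constant $c$ coming from the boundary of the ``pairing'' of letters into blocks of which at least one is a $1$. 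Combining $\frac{(m+1)}{2}|v| - c \le |f(v)| \le |u| + 2(m-1)$ and solving for $|v|$ yields $|v|\le \frac{2}{m+1}\bigl(|u| + 2(m-1) + c\bigr)$; choosing the pairing carefully so that $c = \tfrac{m+1}{2} - (m-1) $ makes the bracket collapse exactly to $|u|+3|f(1)|-3$, giving the claimed inequality.

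The step I expect to be the main obstacle is getting the constant in the lower bound on $|f(v)|$ exactly right, i.e. pinning down how to pair up the letters $t_i,\dots,t_j$ so that each pair contributes at least $|f(1)|+1$ to the length (one $0$-block plus one $1$-block, in some order), handling the leftover letter at the end of the range, and the boundary losses of at most $|f(1)|-1$ at each end of $u$. Everything else is bookkeeping: the existence of $v$ with $u$ a factor of $f(v)$ is immediate from the block decomposition, and the upper bound $|f(v)|\le |u|+2(|f(1)|-1)$ is a one-line estimate. I would present the pairing argument as the one careful computation and leave the substitution into the final bound as routine.
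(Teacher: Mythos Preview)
Your overall architecture matches the paper's: choose $v=t_i\cdots t_j$ minimal so that $u$ is a factor of $f(v)$, bound $|f(v)|\le |u|+2(|f(1)|-1)$ from above, and bound $|f(v)|$ from below by counting how many $1$'s appear in $v$. The gap is in that last step. You claim that ``among any two consecutive letters of $\mathbf t$ at least one is a $1$ (since $\mathbf t$ avoids $00$, being overlap-free)'', but this is false: the Thue--Morse word certainly contains $00$ (already $h^3(0)=01101001$ exhibits it). Overlap-freeness only rules out $000$, which yields at best $|v|_1\ge \lfloor|v|/3\rfloor$, too weak to produce the constant $3|f(1)|-3$. Your subsequent ``choose the pairing so that $c=\tfrac{m+1}{2}-(m-1)$'' is not a computation but a hope, and in fact the needed value is $c=m-1$.

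The correct way to get $|v|_1\ge (|v|-2)/2$, which is exactly what the constant requires, is the paper's observation: since $\mathbf t=h(\mathbf t)$, any factor $v$ of $\mathbf t$ can be written $v=s_1 h(v_1) p_1$ with $|s_1|,|p_1|\le 1$ and $v_1$ a factor of $\mathbf t$. Because $h(0)=01$ and $h(1)=10$, the word $h(v_1)$ has exactly $|h(v_1)|/2$ ones, so $|v|_1\ge |h(v_1)|/2\ge (|v|-2)/2$. Substituting into $|f(v)|=(|f(1)|-1)|v|_1+|v|$ and combining with $|u|\ge |f(v)|-2(|f(1)|-1)$ then gives the stated bound directly. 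So your pairing intuition is right in spirit, but the pairs must come from the block structure of $h$, not from a nonexistent ban on $00$.
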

\begin{proof}
For some words $p$ and $s$ such that $|p|,|s|\le |f(1)|-1$, we have $sup=f(v)$, where $v$ is some factor of ${\bf t}$. We can write $v=s_1h(v_1)p_1$, where $|p_1|,|s_1|\le 1$ and $v_1$ is a factor of ${\bf t}$. Since $|h(v_1)|_1=|h(v_1)|/2$, it follows that 
$$|v|_1\ge {|v|-2\over 2}.$$
Thus 
\begin{eqnarray*}
|u|&\ge& |f(v)|-2(|f(1)|-1)\\
&=&|f(1)||v|_1+|v|_0-2|f(1)|+2\\
&=&|f(1)||v|_1+(|v|-|v|_1)-2|f(1)|+2\\
&=&(|f(1)|-1)|v|_1+|v|-2|f(1)|+2\\
&\ge& (|f(1)|-1)\left({|v|\over 2}-1\right)+|v|-2|f(1)|+2\\
&=& (|f(1)|+1){|v|\over 2}-3|f(1)|+3.
\end{eqnarray*}
Thus $$|v|\le {2\over |f(1)|+1}(|u|+3|f(1)|-3).$$
\end{proof}
\begin{lemma}Let $v$ be a factor of ${\bf t}$ of odd length. Then $v$ is a factor of $h(v')$ for some factor $v'$ of ${\bf t}$ of length $(|v|+1)/2$.  
\end{lemma}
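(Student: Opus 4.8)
The statement to prove is: every factor $v$ of ${\bf t}$ of odd length is a factor of $h(v')$ for some factor $v'$ of ${\bf t}$ of length $(|v|+1)/2$. The plan is to exploit the block structure of ${\bf t}$ under $h$: since ${\bf t}=h({\bf t})$, every occurrence of $v$ inside ${\bf t}$ sits inside a concatenation of $h$-blocks $h(a_1)h(a_2)\cdots h(a_m)$ with $a_1a_2\cdots a_m$ a factor of ${\bf t}$, where each $h(a_i)$ is a length-$2$ block aligned at an even position of ${\bf t}$. First I would fix an occurrence of $v$ in ${\bf t}$ and locate its starting position; after a suitable shift of the window, $v$ is a factor of $h(w)$ for some factor $w$ of ${\bf t}$, where we may take $w$ as short as possible. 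Writing $h(w)=s\,v\,p$ for a (possibly empty) prefix $s$ and suffix $p$, minimality of $|w|$ forces $|s|\le 1$ and $|p|\le 1$ (otherwise a whole boundary block $h(a_1)$ or $h(a_m)$ could be dropped). Hence $|h(w)|=2|w|\in\{|v|,|v|+1,|v|+2\}$.

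Next I would pin down $|w|$ exactly using the parity of $|v|$. Since $|v|$ is odd, $2|w|$ cannot equal $|v|$, so $2|w|\in\{|v|+1,|v|+2\}$, i.e. $|w|\in\{(|v|+1)/2,(|v|+2)/2\}$; but $(|v|+2)/2$ is not an integer when $|v|$ is odd, so $|w|=(|v|+1)/2$ automatically. That is precisely the bound claimed, with $v'=w$. The parity hypothesis is doing all the work here: it rules out the ambiguous cases and collapses the inequality $|w|\le(|v|+1)/2$ together with $2|w|\ge|v|+1$ into an equality.

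The only genuine subtlety — and the step I expect to require the most care — is justifying the claim ``$v$ is a factor of $h(w)$ for some factor $w$ of ${\bf t}$, with $|s|,|p|\le 1$ at a minimal $w$.'' One must argue that the occurrence of $v$ in ${\bf t}={\bf t}=h({\bf t})$ really does lie across whole $h$-blocks aligned to the canonical even/odd parsing of ${\bf t}$; this is standard (it follows from ${\bf t}=h({\bf t})$ and the fact that $h$ has block length $2$, so positions $2i,2i+1$ of ${\bf t}$ form the image of position $i$), but it should be stated cleanly. Given that, the boundary-block trimming argument is immediate: if $|s|\ge 2$ then $s$ contains the whole first block $h(a_1)$ and one could replace $w$ by $a_2\cdots a_m$, contradicting minimality, and symmetrically for $p$. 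Everything after that is the one-line parity computation above, so I would keep the write-up short and concentrate on making the block-alignment observation precise.
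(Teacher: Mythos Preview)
Your argument is correct: minimality of $w$ forces the overhangs $s,p$ to have length at most $1$, and then the parity of $|v|$ pins $|s|+|p|=1$, giving $|w|=(|v|+1)/2$ exactly. The paper simply writes ``Omitted'' for this lemma, so there is no approach to compare against; your block-alignment plus boundary-trimming argument is the standard way to prove such statements for uniform morphic fixed points and would be a perfectly acceptable fill-in.
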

\begin{proof}Omitted.
\end{proof}
\begin{corollary}\label{t bound}
Every factor of ${\bf t}$ of length $2^n+1$ is a factor of the prefix of ${\bf t}$ of length $7(2^n).$
\end{corollary}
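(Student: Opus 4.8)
The plan is to prove Corollary~\ref{t bound} by iterating the two preceding lemmas, peeling off one halving at a time. First I would observe that a factor of $\mathbf{t}$ of length $2^n+1$ is, in particular, a factor of odd length (for $n\ge 1$), so the preceding lemma applies: such a factor is a factor of $h(v')$ for some factor $v'$ of $\mathbf{t}$ of length $(2^n+1+1)/2 = 2^{n-1}+1$. This is again a factor of $\mathbf{t}$ of the form $2^{n-1}+1$, so I can repeat. After $n$ applications I obtain that our original factor is a factor of $h^n(v^{(n)})$ for some factor $v^{(n)}$ of $\mathbf{t}$ of length $2^0+1=2$.

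Next I would bound where such a factor can sit. Any factor of $\mathbf{t}$ of length $2$ occurs among the first few letters of $\mathbf{t}$; concretely, the length-$2$ factors of $\mathbf{t}$ are $00,01,10,11$, and all four already occur in the prefix $0110100110010110$, so certainly within the prefix of length $7$ (indeed within length $6$ suffices, but $7$ gives room). Hence $v^{(n)}$ is a factor of the prefix of $\mathbf{t}$ of length $7$, say $v^{(n)}$ is a factor of $\mathbf{t}[0..6]$. Applying the morphism $h^n$, which multiplies lengths by $2^n$ and sends prefixes of $\mathbf{t}$ to prefixes of $\mathbf{t}$ (since $\mathbf{t}=h^\omega(0)$ is a fixed point of $h$), we get that $h^n(v^{(n)})$ is a factor of $h^n(\mathbf{t}[0..6])$, which is a prefix of $\mathbf{t}$ of length $7\cdot 2^n$. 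Since our original length-$(2^n+1)$ factor is a factor of $h^n(v^{(n)})$, it is a factor of the prefix of $\mathbf{t}$ of length $7(2^n)$, as claimed. The case $n=0$ is the base observation that every length-$2$ factor appears in the prefix of length $7$.

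The main obstacle, such as it is, is purely bookkeeping: one must check that each intermediate factor genuinely has odd length so that the halving lemma keeps applying, and one must track the image lengths carefully through the $n$-fold iteration of $h$. The arithmetic is clean because $h$ is uniform of length $2$: $|h^n(w)| = 2^n|w|$, and the odd length $2^n+1$ halves (after the $+1$) to $2^{n-1}+1$ exactly, so no rounding issues arise and the recursion terminates precisely at length-$2$ factors after $n$ steps. One small point worth stating explicitly is that $h(u)$ being a factor of $h(\mathbf{t})=\mathbf{t}$ whenever $u$ is a factor of $\mathbf{t}$, and more generally $h$ maps prefixes of $\mathbf{t}$ to prefixes of $\mathbf{t}$; both follow immediately from $\mathbf{t}$ being a fixed point of $h$. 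With these remarks in place the proof is a short induction on $n$.
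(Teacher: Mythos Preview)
Your proof is correct and follows essentially the same approach as the paper's: apply the preceding halving lemma $n$ times to reduce to length-$2$ factors, then use that all four length-$2$ words occur in the length-$7$ prefix $0110100$ of $\mathbf{t}$, and push back up via $h^n$. One small slip: your parenthetical claim that ``within length $6$ suffices'' is false, since the prefix $011010$ does not contain $00$; length $7$ is actually sharp here. This does not affect the argument.
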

\begin{proof}
All length two binary words are factors of $0110100$, the length 7 prefix of ${\bf t}$. The result follows by applying the previous lemma $n$ times.\end{proof}

\subsubsection{Patterns $xyxy^Rx^R$ and  $xyx^Ry^Rx$ are  2-avoidable.}

Let $f_1$ be the binary morphism given by $f_1(0)=0$, $f_1(1)=00101101111$, and let ${\bf w_1}=f_1({\bf t})$.

\begin{theorem}
The sequence ${\bold{w_1}}$ avoids $xyxy^Rx^R$.
\end{theorem}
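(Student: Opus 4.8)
The plan is to argue by contradiction: suppose some factor of $\mathbf{w_1}=f_1(\mathbf{t})$ is an instance $XYXY^RX^R$ of $xyxy^Rx^R$ with $X,Y\in T_2^+$. The strategy splits according to how large $X$ and $Y$ are relative to $|f_1(1)|=11$. First I would handle the ``long'' case, where $|XYXY^RX^R|$ is large enough that Lemma~\ref{bounding search} (together with Corollary~\ref{t bound}) confines the occurrence to a bounded prefix of $\mathbf{w_1}$; since the bound is explicit and small, a finite computation rules this out. So the real content is the ``short'' case, where $X$ and $Y$ are short words over $\{0,1\}$, and I would bound the relevant lengths so that only finitely many candidate pairs $(X,Y)$ remain to be examined.

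The key structural observation is that $\mathbf{w_1}$ consists of blocks: reading $\mathbf{t}$ left to right and applying $f_1$, each $0$ contributes a single $0$ and each $1$ contributes $00101101111$. Because $\mathbf{t}$ is overlap-free, the exponents of $0$ between consecutive occurrences of the pattern-block $00101101111$ are tightly controlled. I would isolate the ``rare'' factor $11111$ (which occurs in $\mathbf{w_1}$ only straddling or inside a copy of $f_1(1)$, never from runs of $0$s) and, more usefully, the factor $1111$ together with the distinctive window around it, to force any long enough $X$ to contain a full copy of $f_1(1)$ and hence to be aligned with the block decomposition. Once $X$ is block-aligned, $X=f_1(x')$ for some factor $x'$ of $\mathbf{t}$ (up to a controlled boundary adjustment), and then $XYXY^RX^R$ being a factor of $f_1(\mathbf{t})$ pushes structure back onto $\mathbf{t}$; in particular the appearance of $X$, then $X$ again, then $X^R$ should be incompatible with overlap-freeness of $\mathbf{t}$ unless $X$ is very short, because $X^R$ forces a reversed block $11110110100$ to appear, which is not a factor of $\mathbf{w_1}$ (its length-2 factor $10$ sitting inside a run of $1$s behaves wrongly relative to how $10$ arises in $\mathbf{w_1}$). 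The morphism $f_1(1)=00101101111$ is engineered precisely so that its reversal $11110110100$ cannot occur in $\mathbf{w_1}$, which kills the $Y^RX^R$ tail whenever $X$ is long.

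For the short case I would simply enumerate: if $|X|\le C_1$ and $|Y|\le C_2$ for explicit small constants coming from the block structure, then $XYXY^RX^R$ has bounded length, so by Corollary~\ref{t bound} and Lemma~\ref{bounding search} it would appear in a bounded prefix of $\mathbf{w_1}$, and one checks directly that it does not. The bookkeeping is to make the constants $C_1,C_2$ explicit enough that ``bounded prefix'' is genuinely finite and small.

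The main obstacle I expect is the case analysis at the block boundaries: an occurrence of $XYXY^RX^R$ need not be aligned with the $f_1$-block decomposition, so I would need the boundary-offset argument (as in Lemma~\ref{bounding search}, writing a factor as $s\,f_1(v)\,p$ with $|p|,|s|\le 10$) to reduce to the aligned situation, and then carefully track how the reversal $X^R$ interacts with a possibly non-trivial prefix/suffix overhang. Getting the interplay between ``$X$ long enough to be block-aligned'' and ``$X$ short enough to finish by finite check'' to cover all cases without a gap is where the care is needed; the overlap-freeness of $\mathbf{t}$ and the non-occurrence of $11110110100$ in $\mathbf{w_1}$ are the two facts that make the long case collapse.
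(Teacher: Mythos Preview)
Your opening paragraph has the direction of Lemma~\ref{bounding search} backwards: that lemma says a factor $u$ of $\mathbf{w_1}$ lies inside $f_1(v)$ with $|v|$ bounded \emph{above} by an increasing function of $|u|$. Thus it is \emph{short} instances that are confined (via Corollary~\ref{t bound}) to a bounded prefix of $\mathbf{w_1}$, not long ones; a long $XYXY^RX^R$ yields a large $|v|$ and hence a large prefix, so no finite computation handles that case. Your later paragraphs implicitly reverse course (paragraph~2 is a structural argument for long $X$, paragraph~3 a finite check for short $X,Y$), but the overview misstates which case is which and why.

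The structural idea you sketch---any sufficiently long factor of $\mathbf{w_1}$ contains a full copy of $f_1(1)$, so its reversal contains $(f_1(1))^R=11110110100$, which is not a factor of $\mathbf{w_1}$---can be made to work and does bound $|X|$. But you never say how $|Y|$ gets bounded; without that, the ``short case'' enumeration is not finite. The same reversal argument applies to $Y$ (since both $Y$ and $Y^R$ occur in $XYXY^RX^R$), but you need to say so.

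The paper's proof is considerably simpler and avoids all block-alignment and boundary bookkeeping. The single observation is that in any instance $XYXY^RX^R$ occurring in $\mathbf{w_1}$, each of $X,X^R,Y,Y^R$ is itself a factor of $\mathbf{w_1}$. So it suffices to bound the length of any factor $z$ of $\mathbf{w_1}$ whose reversal $z^R$ is also a factor. The paper checks directly, using Lemma~\ref{bounding search} and Corollary~\ref{t bound} at length $7$, that no such $z$ of length $7$ exists; hence $|X|,|Y|\le 6$, so $|XYXY^RX^R|\le 30$, and one more bounded search (inside $f_1$ of the length-$112$ prefix of $\mathbf{t}$) finishes. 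Your reversed-block argument is really a coarser version of this same ``reversible factors are short'' fact, arrived at with more machinery and a worse constant.
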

\begin{proof}
Let $z$ be a factor of ${\bf w_1}$ such that $z^R$ is also a factor of ${\bf w_1}$.\vspace{.1in}
We claim that  $|z|\le 6$.
 Otherwise, replacing $z$ by its length 7 prefix, ${\bf w_1}$ has a length 7 factor $z$ such that $z^R$ is also a factor of ${\bf w_1}$. we note that $|f_1(1)|=11$, so that by Lemma~\ref{bounding search}, $z$ is a factor of $f_1(v)$, some factor $v$ of ${\bf t}$ where 
$$|v|\le\frac{2}{11+1}(7+3(11)-3)<7.$$
Certainly then an extension of $v$ is a factor of ${\bf t}$ of length $9=2^3+1$, so that by Corollary~\ref{t bound}, $v$ is a factor of the prefix of ${\bf t}$ of length 56. This implies that $z$ and $z^R$ are factors of $f_1(\tau)$, where $\tau$ is the prefix of ${\bf t}$ of length 56. A search shows that $f_1(\tau)$ has no
length 7 factor $z$ such that $z^R$ is also a factor of $f_1(\tau)$.

Suppose that $XYXY^RX^R$ is a factor of ${\bf w_1}$ with $X$, $Y\ne\epsilon$. Since both $X$ and $X^R$, and both $Y$ and $Y^R$ are factors of ${\bf w_1}$, it follows that $|X|$, $|Y|\le 6$, and $|XYXY^RX^R|\le 30$.
By Lemma~\ref{bounding search}, $XYXY^RX^R$ is a factor of $f_1(v')$, some factor $v'$ of ${\bf t}$ where 
$$|v'|\le\frac{2}{11+1}(30+3(11)-3)=10<2^4+1.$$
By Corollary~\ref{t bound}, $v'$ is a factor of the prefix of ${\bf t}$ of length 112, so that $XYXY^RX^R$ is a factor of $f_1(\tau')$, where $\tau'$ is the prefix of ${\bf t}$ of length 112. However, a search shows that $f_1(\tau')$ has no factor $XYXY^RX^R$ with $|X|,|Y|\le 6$.

We conclude that ${\bf w_1}$ avoids $xyxy^Rx^R$.
\end{proof}

Next, we give an infinite binary word that avoids the pattern $xyx^Ry^Rx$. Let $f_2$ be the binary morphism with $f_2(0) = 0$, and $f_2(1) = 00101111$. Let ${\bold{w_2}}=f_2({\bf t})$.

\begin{theorem}
The sequence ${\bold{w_2}}$ avoids $xyx^Ry^Rx$.
\end{theorem}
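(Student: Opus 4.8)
The plan is to follow the template already established for $\mathbf{w_1}$ and the pattern $xyxy^Rx^R$, adapting the numerology to the morphism $f_2$ with $|f_2(1)|=8$. First I would establish a bound on the length of a factor $z$ of $\mathbf{w_2}$ whose reversal $z^R$ is also a factor: the claim will be that $|z|$ is small (I expect a bound around $5$ or $6$, to be pinned down by the search). The argument is the same as before: if $z$ were longer, replace it by a short prefix, apply Lemma~\ref{bounding search} with $|f_2(1)|=8$ to conclude that $z$ lies inside $f_2(v)$ for a short factor $v$ of $\mathbf{t}$, extend $v$ to a factor of $\mathbf{t}$ of length $2^n+1$ for the appropriate small $n$, invoke Corollary~\ref{t bound} to place $v$ inside a bounded prefix $\tau$ of $\mathbf{t}$, and finish by a finite search verifying $f_2(\tau)$ contains no such reversible factor of the offending length.

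Next I would suppose $XYX^RY^RX$ is an instance of the pattern in $\mathbf{w_2}$, with $X,Y\neq\epsilon$. Since $X$ and $X^R$ are both factors of $\mathbf{w_2}$, and likewise $Y$ and $Y^R$, the reversibility bound from the first step forces $|X|$ and $|Y|$ to be bounded, hence $|XYX^RY^RX|\le 5\cdot(\text{that bound})$. Apply Lemma~\ref{bounding search} once more to locate the whole instance inside $f_2(v')$ for a factor $v'$ of $\mathbf{t}$ of bounded length; extend and apply Corollary~\ref{t bound} to confine $v'$ to a bounded prefix $\tau'$ of $\mathbf{t}$; then a finite search over $f_2(\tau')$ verifies no factor of the form $XYX^RY^RX$ with $X,Y$ in the allowed length range occurs. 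Concluding, $\mathbf{w_2}$ avoids $xyx^Ry^Rx$.

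I do not expect any genuine obstacle here — the structure mirrors the preceding theorem almost verbatim. The one place requiring a little care is getting the constants right: I must check that with $|f_2(1)|=8$ the estimates from Lemma~\ref{bounding search} actually yield a factor-length bound $|v|\le 2^n+1$ for a small $n$ (so that the prefix of $\mathbf{t}$ to be searched, of length $7\cdot 2^n$, and its $f_2$-image remain of manageable size for a computer check), and similarly that the reversibility bound on $|z|$ comes out small enough that $|X|,|Y|$ are tightly controlled. A secondary subtlety, as in the $\mathbf{w_2}$-is-square-free style arguments, is that one must make sure the finite searches are over genuinely exhaustive sets — i.e. that every factor of $\mathbf{w_2}$ of the relevant length really does embed in $f_2$ of a prefix of $\mathbf{t}$ of the stated length, which is exactly what the chain Lemma~\ref{bounding search} $\to$ (extend to length $2^n+1$) $\to$ Corollary~\ref{t bound} delivers. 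Once those constants are fixed the proof is routine.
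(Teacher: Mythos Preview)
Your proposal is correct and follows exactly the paper's approach; indeed the paper's own proof is a terse two-sentence version of what you describe, simply invoking ``as in the proof of the previous lemma'' to obtain $|X|,|Y|\le 6$ and then reducing to a finite search in $f_2(\tau')$ where $\tau'$ is the length-$112$ prefix of $\mathbf{t}$. The constants you worry about do work out: with $|f_2(1)|=8$ the bound from Lemma~\ref{bounding search} on a length-$7$ reversible factor gives $|v|\le 6<2^3+1$, and on a length-$30$ instance gives $|v'|\le 11<2^4+1$, so the same prefix lengths ($56$ and $112$) as for $\mathbf{w_1}$ suffice.
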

\begin{proof}
Suppose $XYX^RY^RX$ is a factor of ${\bf w_2}$, $X,Y\ne\epsilon$. As in the proof of the previous lemma, we find that 
$|X|$, $|Y|\le 6$, so that  $XYX^RY^RX$ is a factor of $f_2(\tau')$, where $\tau'$ is the prefix of ${\bf t}$ of length 112. However, a search shows that $f_2(\tau')$ has no such factor. 
\end{proof}

\subsubsection{Avoiding $xyxy^Rx$}
Let $f_3$ be the binary morphism given by $f_3(0)=0$, $f_3(1)=001011$. Let ${\bf w_3}=f_3({\bf t})$.
Define $$\Upsilon=\{1,0,11, 10, 00, 01, 010, 011, 001, 000, 110,
100, 101,0110,$$ $$0000,0001,1001,1000,00001,10000,10001,100001\}.$$

\begin{lemma}
If both $Y$ and $Y^R$ are factors of ${\bf w_3}$, then $Y\in\Upsilon.$
\end{lemma}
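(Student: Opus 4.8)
The plan is to bound the length of any word $Y$ such that both $Y$ and $Y^R$ occur in ${\bf w_3}$, and then enumerate the short candidates. First I would observe that ${\bf w_3}=f_3({\bf t})$ with $|f_3(1)|=6$, so that for a factor $z$ of ${\bf w_3}$ of length $\ell$, Lemma~\ref{bounding search} gives that $z$ is a factor of $f_3(v)$ for some factor $v$ of ${\bf t}$ with $|v|\le \frac{2}{7}(\ell+15)$. Taking $\ell=7$ yields $|v|\le \frac{2}{7}(22)=\frac{44}{7}<7$, so $v$ extends to a factor of ${\bf t}$ of length $9=2^3+1$, and by Corollary~\ref{t bound} such a $v$ is a factor of the length-$56$ prefix $\tau$ of ${\bf t}$. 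Hence any length-$7$ factor of ${\bf w_3}$, together with its reversal, would both be factors of $f_3(\tau)$.

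Next I would run (or invoke) a finite search through $f_3(\tau)$, which is a fixed explicit binary word, checking every length-$7$ factor $z$ to see whether $z^R$ is also a factor. The claim is that no such $z$ exists; this is the crucial computational step. Granting it, we conclude that if both $Y$ and $Y^R$ are factors of ${\bf w_3}$, then $|Y|\le 6$: otherwise replacing $Y$ by its length-$7$ prefix would contradict what we just established (note $Y^R$ being a factor of ${\bf w_3}$ forces every factor of $Y$, in particular the reversal of its length-$7$ prefix, also to be a factor of ${\bf w_3}$).

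Finally, with $|Y|\le 6$ in hand, I would enumerate the binary words of length at most $6$ that actually occur in ${\bf w_3}$ and whose reversal also occurs. By the same Lemma~\ref{bounding search} and Corollary~\ref{t bound} bookkeeping (now with $\ell\le 6$, giving $|v|\le\frac{2}{7}(21)=6<9$, so again $v$ lies in the length-$56$ prefix $\tau$ of ${\bf t}$), this is a finite check against $f_3(\tau)$: list all factors of $f_3(\tau)$ of length $\le 6$, intersect that set with its image under reversal, and verify the result is exactly $\Upsilon$. I expect the main obstacle to be purely the bookkeeping of the search — making sure the prefix length $56$ really is large enough that every relevant $v$ (and hence every relevant factor $z$ of ${\bf w_3}$) is captured, and that the enumerated set coincides with $\Upsilon$ as written rather than being a proper subset or superset. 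Since both verifications are finite and the bound arithmetic is routine, I would simply state that a search confirms the claim.

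\begin{proof}
Since $|f_3(1)|=6$, Lemma~\ref{bounding search} shows that any factor $z$ of ${\bf w_3}$ of length $7$ is a factor of $f_3(v)$ for some factor $v$ of ${\bf t}$ with
$$|v|\le \frac{2}{7}\bigl(7+3(6)-3\bigr)=\frac{44}{7}<7,$$
and similarly any factor of length at most $6$ is a factor of $f_3(v)$ for some factor $v$ of ${\bf t}$ with $|v|\le \frac{2}{7}(6+15)=6$. In either case $v$ extends to a factor of ${\bf t}$ of length $9=2^3+1$, so by Corollary~\ref{t bound}, $v$ is a factor of the length-$56$ prefix $\tau$ of ${\bf t}$. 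Thus every factor of ${\bf w_3}$ of length at most $7$, together with its reversal, would both be factors of $f_3(\tau)$ if each were a factor of ${\bf w_3}$.

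A search shows that $f_3(\tau)$ has no length-$7$ factor $z$ for which $z^R$ is also a factor of $f_3(\tau)$. Hence if both $Y$ and $Y^R$ are factors of ${\bf w_3}$, then $|Y|\le 6$, since otherwise the length-$7$ prefix of $Y$ and its reversal would both be factors of ${\bf w_3}$, hence of $f_3(\tau)$. Finally, a search lists all binary words of length at most $6$ that occur in $f_3(\tau)$ and whose reversal also occurs; this set is exactly $\Upsilon$. Therefore $Y\in\Upsilon$.
\end{proof}
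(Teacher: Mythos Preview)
Your proof is correct and follows exactly the approach the paper takes: the paper's own proof is the single line ``This is established by finite search, using Lemma~\ref{bounding search} and Corollary~\ref{t bound}.'' You have simply made the bounding arithmetic and the two-stage search (first rule out length $7$, then enumerate lengths $\le 6$) explicit, which is a faithful elaboration of that one-liner.
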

\begin{proof} This is established by finite search, using Lemma~\ref{bounding search} and Corollary~\ref{t bound}.
\end{proof}

\begin{lemma}\label{upsilon}Suppose that $X$ and $Y$ are words such that $XY$, $XY^R$, $YX$ and $YX^R$ are all factors of ${\bf w_3}$, and $|X|\ge 3$. Then $Y\in\{0,1,00\}$.
\end{lemma}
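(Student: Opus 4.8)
The plan is to leverage the previous lemma, which tells us that if both $Y$ and $Y^R$ are factors of ${\bf w_3}$ then $Y \in \Upsilon$. Under the hypotheses of Lemma~\ref{upsilon}, $YX$ and $YX^R$ are factors of ${\bf w_3}$; these have the same prefix $Y$ (since $|X| \ge 1$ it isn't immediately obvious both contain a full copy of $Y$, but $XY$ and $XY^R$ are also factors, so in particular $Y$ and $Y^R$ both occur in ${\bf w_3}$ — the former as a suffix of $XY$, the latter as a suffix of $XY^R$). Hence $Y \in \Upsilon$, and it suffices to rule out, for each $Y \in \Upsilon$ with $|Y| \ge 2$ other than $00$, the simultaneous occurrence of all four factors $XY$, $XY^R$, $YX$, $YX^R$ with $|X| \ge 3$.

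First I would bound $|X|$. From $XY$ and $YX$ being factors of ${\bf w_3}$ and $Y \in \Upsilon$ (so $|Y| \le 6$), and the fact that $X$ occurs in ${\bf w_3}$ while $X^R$ also occurs in ${\bf w_3}$ (from $YX$ and $YX^R$, or from $XY$ read together with... actually from $XY^R$ and $XY$ we get $X$ occurring; from $YX^R$ we get $X^R$ occurring). By the analogue of the earlier reversal arguments — a factor $z$ of ${\bf w_3}$ with $z^R$ also a factor of ${\bf w_3}$ has bounded length — I get $|X| \le 5$ (the specific bound to be determined by the finite search behind the preceding lemma, but of the same flavour). So all of $|X|, |Y| \le 6$, and $XY$ has length at most $12$, well under $2^4+1$.

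Then I would invoke Lemma~\ref{bounding search} and Corollary~\ref{t bound}: any factor of ${\bf w_3}$ of length at most $12$ occurs inside $f_3(\tau')$ for $\tau'$ a fixed prefix of ${\bf t}$ (of length around $112$, since $|f_3(1)| = 6$ gives $|v'| \le \frac{2}{7}(12 + 15) < 2^3 + 1$, so actually a prefix of length $56$ suffices). The proof then reduces to a finite verification: enumerate all words $X$ with $3 \le |X| \le 6$ and all $Y \in \Upsilon$ with $2 \le |Y| \le 6$, $Y \ne 00$, and check by direct search through $f_3(\tau')$ that the four words $XY$, $XY^R$, $YX$, $YX^R$ cannot all be factors. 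Conclude $Y \in \{0,1,00\}$.

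The main obstacle — such as it is — is bookkeeping rather than conceptual: pinning down the exact bound on $|X|$ and the exact prefix length of ${\bf t}$ needed so that the finite search is provably exhaustive, and organizing the case analysis over $\Upsilon$ cleanly. Since $\Upsilon$ already encodes which short words have their reversals appearing, one expects many $Y$ to be eliminated quickly (e.g.\ any $Y$ that is a palindrome, like $010$ or $000$ or $0110$, forces $Y = Y^R$ and so cannot directly give a contradiction from the reversal structure alone — for those the contradiction must come from the joint occurrence of $YX$ and $XY$ with $X$ long, i.e.\ essentially from ${\bf w_3}$ avoiding a long square-like pattern). I would present the reduction carefully and then state that the finite search confirms the claim.
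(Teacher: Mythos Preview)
Your approach is correct but takes a somewhat different route from the paper. The paper does \emph{not} bound $|X|$. Instead, for each candidate $Y\in\Upsilon$ it defines
\[
\mathcal{X}_1(Y)=\{\chi\in T_2^3:\chi Y\text{ and }\chi Y^R\text{ are both factors of }{\bf w_3}\},\qquad
\mathcal{X}_2(Y)=\{\chi\in T_2^3:Y\chi\text{ and }Y^R\chi\text{ are both factors of }{\bf w_3}\},
\]
observes that the length-$3$ suffix of $X$ must lie in $\mathcal{X}_1(Y)$ and the length-$3$ prefix of $X$ in $\mathcal{X}_2(Y)$, and then verifies by finite search (via Lemma~\ref{bounding search} and Corollary~\ref{t bound}) that for every $Y\in\Upsilon\setminus\{0,1,00\}$ at least one of these two sets is empty. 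Your route instead bounds $|X|$ outright: since $X$ occurs as a prefix of $XY$ and $X^R$ occurs as a suffix of $YX^R$, the previous lemma applies to $X$ as well, giving $X\in\Upsilon$ and hence $|X|\le 6$ (not $5$ --- the longest element of $\Upsilon$ is $100001$); your brute-force check over all pairs $(X,Y)$ with $X\in\Upsilon$, $|X|\ge 3$, $Y\in\Upsilon\setminus\{0,1,00\}$ is then a perfectly legitimate finite verification. The paper's method is a touch more economical (the search is only over $\chi\in T_2^3$, and it never needs to know that $X^R$ is a factor), while yours is more direct and avoids the auxiliary sets $\mathcal{X}_i$. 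Your worry about palindromic $Y$ is not a genuine obstacle: even when $Y=Y^R$ collapses the constraints $XY=XY^R$, you still retain the separate constraints from $YX$ and $YX^R$, and the bound $X\in\Upsilon$ keeps the search finite regardless.
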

\begin{proof} By the previous lemma, $Y\in\Upsilon$. Define
 $${\mathcal X}_1(Y)=\{\chi:|\chi|=3\mbox{ and }\chi Y, \chi Y^R\mbox{ are both factors of }{\bf w_3}\},$$
 $${\mathcal X}_2(Y)=\{\chi:|\chi|=3\mbox{ and }Y\chi,Y^R \chi\mbox{ are both factors of }{\bf w_3}\}.$$ 
The length 3 suffix of $X$ must be in ${\mathcal X}_1$, and the length 3 prefix of $X$ must be in ${\mathcal X}_2$.
 We can compute ${\mathcal X}_1(Y)$ and ${\mathcal X}_2(Y)$ by a finite search, using Lemma~\ref{bounding search} and Corollary~\ref{t bound}. For $Y\in\Upsilon-\{0,1,00\}$, we find that
${\mathcal X}_1(Y)=\phi$ or ${\mathcal X}_2(Y)=\phi.$ The result follows.
\end{proof}

Let $u$ be a factor of ${\bf w_3}$. Define a {\bf left completion} of $u$ to be a word $v=f_3(t)$, such that $u$ is a suffix of $f_3(t)$, but $u$ is a not a suffix of any proper suffix of $v$ of the form $f_3(t')$. Thus, for example, $001011$ is a left completion of 11 and of 011, but 01 has no left completion.

\begin{lemma}
Let $u$ be a factor of ${\bf w_3}$ which ends in 11. Then $u$ has a unique left completion.
\end{lemma}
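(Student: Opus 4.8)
The plan is to understand the structure of $\mathbf{w_3}=f_3(\mathbf t)$ near an occurrence of $11$. The key observation is that $f_3(1)=001011$ is the only block in the image of $f_3$ that contains $11$, and it does so only as its suffix; $f_3(0)=0$ contributes no $1$'s. Consequently, whenever $11$ occurs in $\mathbf{w_3}$, the two $1$'s must be the last two letters of a block $f_3(1)$. Since $\mathbf{w_3}$ is a concatenation of blocks $f_3(0)=0$ and $f_3(1)=001011$, and the factorization of a given factor into such blocks is determined once we know where the block $f_3(1)$ ending in our fixed $11$ begins, the idea is: locate the occurrence of $11$ at the end of $u$, back up exactly six letters to the start of that copy of $f_3(1)$, and then extend leftward letter by letter, at each step checking whether the letters read so far are consistent with being a suffix of $f_3(t)$ for the (unique) possible block-factorization.

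First I would make precise the claim that the block decomposition is forced. Because $\mathbf t$ itself has no factor $000$ (it avoids overlaps, so in particular $111$ and $000$ are excluded), the word $\mathbf{w_3}$ is a concatenation of blocks from $\{f_3(0),f_3(1)\}=\{0,\,001011\}$ in which no three consecutive $0$-blocks occur and no two consecutive $1$-blocks occur without intervening $0$-blocks governed by $\mathbf t$'s structure; but for this lemma I only need the weaker fact that the position of any $11$ in $\mathbf{w_3}$ determines the right-hand endpoint of a unique $f_3(1)$-block, hence (by peeling off blocks from the right) determines a unique candidate block-factorization of any suffix $u$ ending in that $11$. Then a left completion $v=f_3(t)$ with $u$ as a suffix must use exactly this block-factorization of $u$, extended to the left by whole blocks; "$u$ is not a suffix of any proper suffix of $v$ of the form $f_3(t')$" forces $v$ to stop at the earliest block boundary at or to the left of the start of $u$'s forced factorization. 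The subtlety is that $u$ may begin in the \emph{middle} of a block; since $f_3(1)=001011$ is not a suffix of any of its own proper suffixes extended arbitrarily — more precisely, one checks directly that no proper nonempty suffix of $001011$ is also a suffix of $0\cdot 001011$ or of $001011\cdot 001011$ in a position that would create ambiguity — the block in which $u$ begins, and hence $v$, is uniquely determined.

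The key steps in order: (1) show $11$ in $\mathbf{w_3}$ forces a block $f_3(1)=001011$ ending at that position, using $f_3(0)=0$ and the fact that $11$ appears in $f_3(1)$ only as its suffix; (2) deduce that any suffix $u$ of $\mathbf{w_3}$ ending in $11$ has a uniquely determined decomposition into a (possibly partial) leftmost block followed by whole blocks; (3) observe that a left completion $v=f_3(t)$ must extend this decomposition leftward by whole blocks and, by minimality, must terminate exactly at the block boundary immediately to the left of (or at) the left end of $u$, so $v$ is the image of the shortest $t$ whose $f_3$-image has $u$ as a suffix — and this is unique because (1)–(2) leave no freedom in how the partial leftmost block is completed, the only possibilities being that $u$ already starts at a block boundary or that the leftmost partial block is a nonempty proper suffix of $001011$, which can be completed to $001011$ in exactly one way; (4) conclude uniqueness of $v$.

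The main obstacle I expect is step (3), specifically ruling out that a short suffix $u$ ending in $11$ could be completed on the left in two genuinely different ways — e.g., that a factor like $011$ or $11$ sits as a suffix of $f_3(t)$ for two non-comparable block-factorizations. This is really a statement about the \emph{synchronization} (code-like) property of the block set $\{0,001011\}$: it must be shown that this set is a comma-free-like or at least uniquely-decipherable code with enough synchronization that knowing a suffix ending in $11$ pins down the parse. I would dispatch this by a short explicit case analysis on the at most six possible nonempty proper suffixes of $001011$ (namely $1$, $11$, $011$, $1011$, $01011$, $01011$ with the extra $0$, i.e. $001011$ itself excluded), checking for each whether prepending $0$'s or a preceding block $001011$ ever reproduces the same suffix pattern in a shifted block boundary; since $\mathbf{w_3}=f_3(\mathbf t)$ and $\mathbf t$ has no $000$, the only ambiguous-looking case is quickly eliminated, giving uniqueness.
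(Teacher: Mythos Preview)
Your proposal is correct and follows essentially the same approach as the paper, whose entire proof is the single word ``Induction.'' The right-to-left parsing you describe (each occurrence of $11$ in an $f_3$-image marks the end of a block $f_3(1)=001011$, so blocks peel off uniquely from the right) is precisely that induction; your aside that ``no two consecutive $1$-blocks occur'' is false (since $11$ is a factor of $\mathbf t$) but unused, and invoking ``$\mathbf t$ has no $000$'' is unnecessary since $\{0,001011\}$ is already a suffix code, which is all that is needed.
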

\begin{proof}
Induction.
\end{proof}

We remark that if $p$ is a prefix of ${\bf w_3}$ and 11 is a suffix of $p$, then $p=f_3(t)$ for some prefix $t$ of ${\bf t}$.

\begin{theorem}
Word ${\bf w_3}$ avoids $xyxy^Rx$.
\end{theorem}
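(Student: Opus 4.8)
The plan is to assume, for contradiction, that some instance $XYXY^RX$ occurs as a factor of $\mathbf{w_3}$, with $X,Y\neq\epsilon$, and to derive constraints on $|X|$ and $|Y|$ that reduce the problem to a finite search on a known prefix of $\mathbf{w_3}$. First I would handle the easy case where $|X|$ is small: if $|X|\le 2$, then $X$ and $X^R$ are both factors of $\mathbf{w_3}$, and $Y$ sits between copies of $X$, so $Y$ and $Y^R$ are also factors; using the bound from Lemma~\ref{bounding search} together with Corollary~\ref{t bound} one gets $XYXY^RX$ inside $f_3(\tau)$ for an explicit prefix $\tau$ of $\mathbf{t}$, and a finite check finishes this subcase. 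So the real content is the case $|X|\ge 3$.

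When $|X|\ge 3$, I would invoke Lemma~\ref{upsilon}: since $XY$, $XY^R$, $YX$ and $YX^R$ are all factors of $\mathbf{w_3}$ (read off from the instance $X\cdot Y\cdot X\cdot Y^R\cdot X$), the lemma forces $Y\in\{0,1,00\}$. This is the crucial reduction, because now the "middle" blocks $Y$ are tiny and we are essentially looking at a factor of the form $X\,y\,X\,y^R\,X$ where $y$ is one or two letters. The next step is to locate the occurrences of the long block $X$ precisely inside the $f_3$-structure. Here is where the left completion machinery comes in: I would argue that since $X$ is long and the word $\mathbf{w_3}$ is built from the blocks $f_3(0)=0$ and $f_3(1)=001011$ (each ending in $11$ except the trivial block), an occurrence of $X$ in $\mathbf{w_3}$ essentially determines, via the unique left completion of any factor ending in $11$, an occurrence of a factor $t$ of $\mathbf{t}$ with $f_3(t)$ containing $X$ in a controlled position. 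The three occurrences of $X$ in $XYXY^RX$, separated by the short words $Y$ and $Y^R$, must therefore correspond to three nearby occurrences in $\mathbf{t}$ that are compatible with the extremely rigid block structure; combined with the fact that $\mathbf{t}$ is overlap-free, this should pin down $|X|$ to a bounded range.

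Once $|X|$ is bounded (say $|X|\le N$ for an explicit $N$ coming from the above analysis) and $Y\in\{0,1,00\}$, the whole instance $XYXY^RX$ has bounded length, so by Lemma~\ref{bounding search} and Corollary~\ref{t bound} it must appear in $f_3(\tau')$ for an explicit prefix $\tau'$ of $\mathbf{t}$; a finite search shows no such factor exists, completing the proof. I expect the main obstacle to be the middle step: making the left-completion argument genuinely rigorous, i.e.\ showing that three occurrences of a long $X$ interleaved with single-letter or two-letter gaps cannot be accommodated by the block structure of $\mathbf{w_3}$ unless $X$ is short. The delicate point is that the gap words $Y=0$, $Y=1$, $Y=00$ could in principle straddle block boundaries in several ways, so one has to track how the left completion of $X$ interacts with the letter(s) of $Y$ on either side and rule out each alignment; this is the kind of case analysis (on the position of $X$ modulo the block $001011$ and on which of the three values $Y$ takes) that does the real work, and everything before and after it is bookkeeping plus a computer search.
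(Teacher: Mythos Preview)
Your overall plan matches the paper's closely: reduce $Y$ to $\{0,1,00\}$ via Lemma~\ref{upsilon}, then exploit the $f_3$-block structure and the overlap-freeness of $\mathbf{t}$. But the endgame you describe diverges from the paper and contains a genuine gap. You say that the three occurrences of $X$, together with overlap-freeness of $\mathbf{t}$, ``should pin down $|X|$ to a bounded range'', after which a finite search finishes things. That is not how the argument actually closes, and as stated it is not clear how it \emph{could} close: overlap-freeness does not naturally produce an upper bound on $|X|$; it produces a contradiction. If your alignment argument really lifts the three occurrences of $X$ to three equally spaced occurrences of a word in $\mathbf{t}$, you already have an overlap in $\mathbf{t}$ and you are done for \emph{every} $|X|$, with no further search.

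Concretely, the paper never bounds $|X|$. After noting $Y\in\{0,1,00\}$ (hence $Y=Y^R$) and that any length-$9$ factor of $\mathbf{w_3}$ contains $11$, it writes $X=X'X''$ with $11$ a suffix of $X'$ and $X''$ containing no $11$. The point of the unique left-completion lemma is that every occurrence of $X'$ in $\mathbf{w_3}$ ends at an $f_3$-block boundary. Applied to the prefix $pX'X''YX'X''YX'$ of $\mathbf{w_3}$, this forces $X''YX'=f_3(t_5t_4)$ for factors $t_4,t_5$ of $\mathbf{t}$ with $f_3(t_4)$ the left completion of $X'$; hence $p'f_3(t_4t_5t_4t_5t_4)$ is a prefix of $\mathbf{w_3}$, and injectivity of $f_3$ gives the overlap $t_4t_5t_4t_5t_4$ in $\mathbf{t}$, a contradiction. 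The finite search is only used up front (for $|X|\le 8$, $|Y|\le 2$), not at the end. So your case analysis ``on the position of $X$ modulo the block $001011$'' is replaced by a single clean alignment step: locate the last $11$ in $X$, and unique left completion does the rest.
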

\begin{proof}
A finite search shows that ${\bf w_3}$ contains no factor $XYXY^RX$ with $1\le |X|\le 8$ and $1\le |Y|\le 2$. Suppose that, nevertheless, ${\bf w_3}$ contains some factor $XYXY^RX$, $|X|,|Y|\ne 0$. By Lemma~\ref{upsilon}, $Y\in\{0,00,1\}$, so that $|X|\ge 9$. This means that $Y=Y^R$, so it will be notationally simpler to write $XYXYX$ for $XYXY^RX$. It is easy to show that (or alternatively, by a finite search, invoking Lemma~\ref{bounding search} and Corollary~\ref{t bound}) any factor $\chi$ of ${\bf w_3}$ with $|\chi|=9$ contains the factor 11. Therefore, write $X=X'X^{\prime\prime}$, where 11 is a suffix of $X'$, and $|X^{\prime\prime}|_{11}=0$.

Let $pXYXYX=pX'X^{\prime\prime}YX'X^{\prime\prime}YX'X^{\prime\prime}$ be a prefix of ${\bf w_3}$ for some $p$. Let $v$ be the left completion of $X'$. 
Then $pX'X^{\prime\prime}YX'=f_3(t_1)$ and $pX'X^{\prime\prime}YX'X^{\prime\prime}YX'=f_3(t_2)$ for some prefixes $t_1$ and $t_2$ of ${\bf t}$. It follows that $X^{\prime\prime}YX'=f_3(t_3)$ for the factor $t_3=t_1t_2^{-1}$ of ${\bf t}$. Therefore, some suffix of $X^{\prime\prime}YX'$ is a left completion of $X'$; by uniqueness of left completions,  $v$ is a suffix of $X^{\prime\prime}YX'$. Write $v=f_3(t_4)$ for some factor $t_4$ of ${\bf t}$. Then $X^{\prime\prime}YX'=f_3(t_5t_4)$, where $t_5=t_3t_4^{-1}$.
Since $X'$ has a unique left completion, $v$ is a suffix of $pX'$. Write $pX'=p'v$, and $pXYXYX'=p'vX^{\prime\prime}YX'X^{\prime\prime}YX'=p'f_3(t_4t_5t_4t_5t_4)$.
 Since $f_3$ is injective, 
the factor $f_3(t_4t_5t_4t_5t_4)$ of ${\bf w_3}$ implies the existence of the overlap $t_4t_5t_4t_5t_4$ in ${\bf t}$, contradicting the overlap-freeness of ${\bf t}$.

\end{proof}

\subsubsection{Pattern $xyx^Ryx$ is 2-avoidable.}

Let $f_4$ be the binary morphism given by $f_4(0)=0$, $f_4(1)=1000010011$. Let ${\bf w_4}=f_4({\bf t})$. We observe that 011 only occurs in ${\bf w_4}$ as a suffix of $f_4(1)$. More formally:
\begin{lemma}If $p011$ is a prefix of ${\bf w_4}$, then $p011=f_4(t1)$ for some prefix $t1$ of ${\bf t}$.
\end{lemma}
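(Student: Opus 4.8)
The plan is to read the conclusion off the block structure of ${\bf w_4}$. Write ${\bf t}=t_1t_2\cdots$ with each $t_i\in\{0,1\}$, so that ${\bf w_4}=f_4(t_1)f_4(t_2)\cdots$ is a concatenation of \emph{blocks}, each block being either $0$ or $f_4(1)=1000010011$. I claim that the factor $011$ occurs in ${\bf w_4}$ only as the length-$3$ suffix of a block equal to $f_4(1)$. Granting this, if $p011$ is a prefix of ${\bf w_4}$ then the indicated copy of $011$ occupies the last three positions of some block $f_4(t_m)=f_4(1)$, so $t_m=1$ and $p011=f_4(t_1\cdots t_m)$; this is exactly $p011=f_4(t1)$ with $t1=t_1\cdots t_m$ a prefix of ${\bf t}$ ending in the letter $1$.

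To prove the claim I would take an occurrence of $011$ at positions $j,j+1,j+2$ of ${\bf w_4}$ — so the symbols at positions $j+1,j+2$ are $1,1$ — and distinguish two cases according to whether $j+1$ and $j+2$ lie in the same block. If they do, then since $|f_4(0)|=1$ that block must be $f_4(1)=c_1c_2\cdots c_{10}=1000010011$, and checking the length-$2$ factors of $1000010011$ shows its only occurrence of $11$ is $c_9c_{10}$; hence position $j$ carries the symbol immediately before $c_9$ in the same block, namely $c_8=0$, and the copy of $011$ is precisely the suffix $c_8c_9c_{10}$ of that block, as claimed. If instead $j+1$ and $j+2$ lie in different blocks, then the block containing $j+1$ ends in the symbol $1$, hence equals $f_4(1)$, and the next block begins with $1$, hence also equals $f_4(1)$; but then position $j$ carries the symbol $c_9=1$ of the first of these two blocks, so $j,j+1,j+2$ spell $111$ rather than $011$ — a contradiction. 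This exhausts the cases and proves the claim.

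I expect no real obstacle: the whole argument is a finite check of how a length-$3$ window meets the two possible block shapes $0$ and $1000010011$. The one point worth a moment's care is that ${\bf t}$ does contain the factor $11$, so the block pair $f_4(1)f_4(1)$ genuinely occurs in ${\bf w_4}$ and $11$ really does straddle block boundaries there; what saves the lemma is that such a straddling $11$ is always preceded by the penultimate symbol $c_9=1$ of $f_4(1)$, so it is never extended on the left to $011$.
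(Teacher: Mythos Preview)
Your proof is correct and follows the same idea the paper uses: the paper simply records the observation that $011$ occurs in ${\bf w_4}$ only as the suffix of a block $f_4(1)$ and states the lemma without further argument, and your two-case block analysis supplies exactly the omitted details.
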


\begin{corollary}\label{1 in t}Let $py$ be a prefix of ${\bf w_4}$. Suppose that $y=f_4(\hat{t})$ for some factor $\hat{t}$ of ${\bf t}$ where $|\hat{t}|_1>0$. Then $p=f_4(\tau)$ for some prefix $\tau$ of ${\bf t}$.
\end{corollary}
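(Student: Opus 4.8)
\textbf{Proof proposal for Corollary~\ref{1 in t}.}

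The plan is to reduce the statement to the previous lemma by locating, inside the given prefix $py$, a short concrete marker ``$011$'' whose occurrences in ${\bf w_4}$ are tightly controlled. By hypothesis $y=f_4(\hat t)$ with $|\hat t|_1>0$, so $\hat t$ contains the letter $1$, and since $f_4(1)=1000010011$ \emph{ends} in $011$, the block $f_4(1)$ — hence $y$ — contains an occurrence of $011$; moreover, reading $y$ from the left, the \emph{last} occurrence of $011$ inside $y$ is precisely the suffix $011$ of the image $f_4(1)$ coming from the last $1$ of $\hat t$. Thus there is a prefix $q$ of ${\bf w_4}$ of the form $q011$ with $q011$ a prefix of $py$, and with $q011$ ending exactly where some image block $f_4(1)$ ends.

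The key step is then to invoke the preceding lemma: since $q011$ is a prefix of ${\bf w_4}$, we get $q011=f_4(t'1)$ for some prefix $t'1$ of ${\bf t}$, i.e. $q=f_4(t')$ for a prefix $t'$ of ${\bf t}$. Now I need to recover $p$ itself, not just $q$. Write $py = q\,r$ where $r$ is the (possibly empty) suffix of $y$ following the marked occurrence of $011$; by the choice of that occurrence as the one arising from the final $1$ of $\hat t$, the tail $r$ is exactly $f_4$ of the suffix of $\hat t$ after its last $1$, which is a block of $0$'s, say $0^m$. So $py = f_4(t')\,0^m = f_4(t'\,0^m)$, and $t'\,0^m$ is a prefix of ${\bf t}$ provided $t'\,0^m$ is genuinely a prefix of ${\bf t}$ — which it is, because $py$ is a prefix of ${\bf w_4}=f_4({\bf t})$ and $f_4$ is injective with $f_4(0)=0$, so a block of $0$'s appended to $f_4(t')$ inside ${\bf w_4}$ must correspond to a block of $0$'s appended to $t'$ inside ${\bf t}$. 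Finally, $p$ is the prefix of $py$ of length $|py|-|y|$; since $y=f_4(\hat t)$ begins with $f_4(\hat t_0)$ for the prefix-of-$0$'s part and the marked $011$ lies inside $y$, the boundary between $p$ and $y$ falls on an $f_4$-block boundary of $f_4(t'0^m)$, so $p=f_4(\tau)$ where $\tau$ is the corresponding prefix of $t'0^m$, hence of ${\bf t}$.

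The main obstacle I expect is the bookkeeping in the last step: one must check that the cut point separating $p$ from $y$ lands on an image-block boundary. This is where the hypothesis $|\hat t|_1>0$ does real work — it guarantees that $y$ starts at a block boundary ``for free'' once we know $q$ ends at one, because the portion of $y$ preceding the first full $f_4(1)$ is again a run of $0$'s ($=f_4(0^\ell)$) and any such run sitting inside ${\bf w_4}$ lifts uniquely to a run of $0$'s in ${\bf t}$ by injectivity of $f_4$. The rest is routine: assemble $\tau$ as the concatenation of these three pieces (the leading $0$'s of $\hat t$, nothing else before $y$ is needed since $p$ ends where $y$ begins), confirm it is a prefix of ${\bf t}$, and conclude $p=f_4(\tau)$.
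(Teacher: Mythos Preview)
Your approach is essentially correct in spirit---locate an occurrence of $011$ inside $y$ coming from some $f_4(1)$-block, apply the preceding lemma to the prefix of ${\bf w_4}$ ending there, and then strip zeros---but it is considerably more tangled than necessary, and there is a concrete bookkeeping slip.

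The slip: you set things up so that $q011$ is the prefix of $py$ ending at your marked $011$, and then you write ``$py=q\,r$'' with $r=0^m$ the tail of $y$ after that $011$. But that should be $py=(q011)\,r$, hence $py=f_4(t'1)\,0^m=f_4(t'\,1\,0^m)$, not $f_4(t')\,0^m$ as you wrote. This propagates through the rest of the paragraph (all your $t'0^m$'s should be $t'10^m$'s). Once this is fixed, to recover $p$ you still need that $\hat t$ is a \emph{suffix} of $t'10^m$, i.e.\ a right-to-left unique-parsing property of $f_4$; you gesture at this but do not argue it.

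The paper avoids all of this by choosing the \emph{first} $1$ of $\hat t$ rather than the last. Write $\hat t=0^n1\cdots$ with $n\in\{0,1,2\}$; then $p\,0^n f_4(1)$ is a prefix of ${\bf w_4}$ ending in $011$, so by the lemma $p\,0^n f_4(1)=f_4(t1)$ for some prefix $t1$ of ${\bf t}$. Cancelling $f_4(1)$ gives $p\,0^n=f_4(t)$; since $f_4(0)=0$ and $f_4(1)$ ends in $1$, the word $t$ must end in $0^n$, and $\tau=t\,0^{-n}$ works. This is a two-line argument: using the first $1$ places the marked $011$ immediately adjacent to $p$ (up to a short run of zeros), so there is no need to parse the rest of $y$ or invoke any suffix-code property. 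Your final paragraph in fact drifts toward exactly this idea (``the portion of $y$ preceding the first full $f_4(1)$ is a run of $0$'s''), which suggests you already see that the first-$1$ choice is the clean one.
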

\begin{proof} Let $t'$ be the shortest prefix of $\hat{t}$ that contains a 1. Thus $t'=0^n1$, some $n\in\{0,1,2\}$, and $pf_4(t')=p0^nf_4(1)$ is a prefix of ${w_4}$. By the previous lemma, $p0^nf_4(1)=f_4(t1)$ for some prefix $t1$ of ${\bf t}$. Then $0^n$ is a suffix of $t$, and  the result follows letting $\tau=t0^{-n}$.
\end{proof}\begin{corollary}Let $py$ be a prefix of ${\bf w_4}$. Suppose that $y=f_4(\hat{t})$ for some factor $\hat{t}$ of ${\bf t}$, and $|y|\ge 6$.  Then $p=f_4(\tau)$ for some prefix $\tau$ of ${\bf t}$.
\end{corollary}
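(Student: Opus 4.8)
The plan is to deduce this corollary from the previous one by a simple case analysis on whether $\hat t$ contains a $1$. If $|\hat t|_1 > 0$, then the claim is immediate from Corollary~\ref{1 in t}. So suppose $|\hat t|_1 = 0$, i.e., $\hat t = 0^m$ for some $m \ge 0$; then $y = f_4(0^m) = 0^m$, and since $|y| \ge 6$ we have $m \ge 6$. I must now argue that a block of six or more consecutive $0$'s in ${\bf w_4}$ cannot occur in a position that straddles an $f_4$-block boundary in a bad way — more precisely, that $py = p0^m$ with $m \ge 6$ forces $p$ to end at an $f_4$-image boundary.

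The key observation is that $f_4(1) = 1000010011$ contains runs of $0$'s of length at most $4$ internally, and $f_4(0) = 0$. Hence a run of $0$'s in ${\bf w_4}$ of length $\ge 6$ cannot sit entirely inside a single $f_4(1)$-block, nor can it be formed using only the trailing $0$'s of one $f_4(1)$ (there are none — $f_4(1)$ ends in $11$) together with following material. The only way to build a long run of $0$'s is to use several consecutive $f_4(0) = 0$ images, possibly with a few $0$'s borrowed from the start of an adjacent $f_4(1)$ block. Concretely, since $f_4(1)$ begins with exactly one $0$ before its first $1$ and ends with $11$, a maximal run of $0$'s in ${\bf w_4}$ corresponds in ${\bf t}$ either to a maximal block $0^k$ (contributing $0^k$) or to $10^k$ (contributing the single leading $0$ of $f_4(1)$ followed by $0^k$, i.e., $0^{k+1}$). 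Since ${\bf t}$ is overlap-free, $k \le 2$, so maximal runs of $0$'s in ${\bf w_4}$ have length at most $3$ — but wait, this would contradict $m \ge 6$. Let me re-examine: actually $f_4(1) = 1000010011$ has an internal run $0000$ of length $4$, so runs of $0$'s in ${\bf w_4}$ can have length up to, say, $4$ or a little more when a trailing/leading $0$ abuts; in any case the point is that any \emph{sufficiently long} run of $0$'s must contain an internal $0000$ from some $f_4(1)$, and this pins down the alignment.

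The cleanest route is therefore: observe that $0^6$ cannot be a factor of ${\bf w_4}$ at all, because examining how $0$'s accumulate across block boundaries (each $f_4(1)$ contributes at most the pattern $\ldots 0$ at its tail — none — or $0\ldots$ at its head — one $0$; each $f_4(0)$ contributes one $0$; and ${\bf t}$ is overlap-free so has no $000$), one sees the longest run of $0$'s in ${\bf w_4}$ is bounded by a small constant strictly less than $6$. If that bound is indeed below $6$, then the case $|\hat t|_1 = 0$ with $|y| \ge 6$ is vacuous, and only Corollary~\ref{1 in t} is needed. I expect the main obstacle to be getting this run-length bound exactly right: one has to check the internal run $0000$ inside $f_4(1)$, the boundary $\ldots 11 \mid 0 \ldots$ and $\ldots 11 \mid 1000\ldots$ transitions dictated by which factors $00$, $01$, $10$, $11$ occur in ${\bf t}$, and confirm that no concatenation produces $0^6$. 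Once that run bound is established, the corollary follows by the two-case split described above, invoking Corollary~\ref{1 in t} in the only non-vacuous case.
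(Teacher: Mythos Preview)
Your case split is right, and the non-vacuous case ($|\hat t|_1>0$) is exactly what the paper does: apply the previous corollary directly. For the case $|\hat t|_1=0$, however, you take a considerable detour, attempting to bound the maximal run of $0$'s in ${\bf w_4}$ by analysing how $f_4$-blocks concatenate, so as to conclude that $y=0^m$ with $m\ge 6$ cannot occur as a factor of ${\bf w_4}$.

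The paper's argument is a one-liner that never leaves ${\bf t}$: by hypothesis $\hat t$ is a factor of ${\bf t}$, and since ${\bf t}$ is overlap-free it contains no $000$. Hence any factor $\hat t$ of ${\bf t}$ with $|\hat t|_1=0$ lies in $\{\epsilon,0,00\}$, so $|y|=|f_4(\hat t)|\le |f_4(00)|=2<6$, and the case is vacuous immediately. You actually state the relevant fact (``${\bf t}$ is overlap-free so has no $000$'') in the middle of your run-length analysis, but you apply it only indirectly to the structure of ${\bf w_4}$ rather than directly to $\hat t$ itself. Your route does work once the boundary checks are completed (the maximal $0$-run in ${\bf w_4}$ is indeed $4$), but it is strictly more effort for the same conclusion.
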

\begin{proof}The only factors of ${\bf t}$ not containing a 1 are $\epsilon$, 0 and 00. Since $|y|\ge 6>|f_4(00)|=2$, we conclude that $|\hat{t}|_1>0$. 
\end{proof}
Recall that a factor $y$ of ${\bf w_4}$ is {\bf bispecial} if $0y$, $1y$, $y0$, $y1$ are all factors of ${\bf w_4}$.
\begin{lemma}
Let $y$ be a bispecial factor of ${\bf w_4}$ with $|y|\ge 6$. Then $y=f_4(t)$ for some factor $t$ of ${\bf t}$.
\end{lemma}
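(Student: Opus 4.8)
The plan is to show that any bispecial factor $y$ with $|y|\ge 6$ must both begin and end at an $f_4$-block boundary; once we know that, $y$ is a concatenation of complete blocks $f_4(0)=0$ and $f_4(1)=1000010011$, and reading off the corresponding letters of ${\bf t}$ produces the factor $t$ of ${\bf t}$ with $y=f_4(t)$.

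First I would pass to length-$6$ windows. Since $y0$ and $y1$ are both factors of ${\bf w_4}$, so are $r0$ and $r1$, where $r$ is the length-$6$ suffix of $y$; hence $r$ is right-special. Symmetrically, the length-$6$ prefix $\ell$ of $y$ is left-special. So it suffices to establish: (i) a right-special factor of ${\bf w_4}$ of length $6$ occurs in ${\bf w_4}$ only ending at an $f_4$-block boundary; and (ii) a left-special factor of length $6$ occurs only beginning at an $f_4$-block boundary. Granting (i) and (ii), the given occurrence of $y$ ends where its suffix $r$ ends (a block boundary) and begins where its prefix $\ell$ begins (a block boundary), so $y=f_4(t)$.

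To prove (i), note that the only positions interior to a block are positions $1,\dots,9$ of an $f_4(1)=1000010011$ block (an $f_4(0)$ block is a single letter). If a factor $z$ occurs ending at interior position $i$ of an $f_4(1)$-block then the letter following that occurrence is forced (one computes: it is $0$ for $i\in\{1,2,3,4,6,7\}$ and $1$ for $i\in\{5,8,9\}$), and moreover the length-$6$ suffix of $z$ lies in a short, explicitly computable list: for $i\ge 6$ it is literally $f_4(1)$ read from position $i-5$ to $i$, and for $i<6$ it depends only on the one or two preceding blocks. A finite search, bounded using Lemma~\ref{bounding search} and Corollary~\ref{t bound} so that it suffices to inspect $f_4$ applied to the length-$56$ prefix of ${\bf t}$, then verifies that for each $i\in\{1,\dots,9\}$ no length-$6$ word arising as such a suffix also arises as the length-$6$ suffix of an occurrence that can be right-extended by the \emph{other} letter (including occurrences ending at a block boundary). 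Hence a right-special length-$6$ factor never ends at an interior position, which is (i). Statement (ii) is handled the same way, inspecting beginnings of occurrences instead; here the fact that $0^5$ is not a factor of ${\bf w_4}$ and that $011$ occurs only as a suffix of $f_4(1)$ (via the preceding lemma on prefixes ending in $011$) is what forces the prepended letter to be unique in the offending cases, so the search closes.

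The main obstacle is precisely this finite search for (i) and (ii): one must treat every interior phase $i$, and near a block boundary the length-$6$ window is not determined by the phase alone, so a handful of extra subcases appear; one must also check that the window length $6$ — rather than something shorter — is exactly what makes all of the conflicting phases genuinely distinguishable (e.g.\ distinguishing an occurrence ending at position $1$ of an $f_4(1)$-block, where the suffixes look like $011001$, $001101$, $100111$, from any occurrence right-extendable by $1$). Once this bookkeeping is complete, the reduction to block boundaries — and therefore the conclusion $y=f_4(t)$ — is immediate.
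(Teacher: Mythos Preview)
Your argument is correct and rests on the same idea as the paper's: a bispecial factor of length $\ge 6$ must begin and end at $f_4$-block boundaries, and this is forced by a short local analysis near each boundary. The paper carries out that analysis by hand on the decomposition $y=s\,f_4(t)\,p$: it first disposes of the length-$\ge 6$ internal factors of $f_4(1)$, then eliminates $|s|=1$, $|s|=2$, and $|s|\ge 3$ (and symmetrically $|p|=1,2,3$ and $|p|\ge 4$) by explicitly reading off a length-$6$ prefix or suffix of $y$ together with its phase and observing that the neighbouring letter is forced---for $|s|\ge 3$ and $|p|\ge 4$ this uses directly that $011$ and $1000$ occur only at one phase inside $f_4(1)$. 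You package the same check differently, reducing to the claim that every right-special (resp.\ left-special) length-$6$ factor occurs only ending (resp.\ beginning) at a block boundary, and then deferring the whole verification to a single finite search bounded via Lemma~\ref{bounding search} and Corollary~\ref{t bound}. Either organization works; the paper's version avoids the computer search at the cost of a slightly longer case list.
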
 
\begin{proof}
Either $y$ is an internal factor of $f_4(1)$, or $y$ can be written as $y=sf_4(t)p$ where $t$ is a factor of ${\bf t}$, $s$ is a suffix of $f_4(1)$, $p$ is a prefix of $f_4(1)$, and $|s|,|p|<|f_4(1)|$. Now the internal factors of $f_4(1)$ of length at least 6 are 000010, 000100, 001001, 0000100, 0001001 and 00001001. Each of these only occurs in ${\bf w_4}$ inside a copy of $f_4(1)$; therefore, none of these are bispecial (or even right special or left special).

Therefore write $y=sf_4(t)p$ where $s$ is a suffix of $f_4(1)$ and $p$ is a prefix of $f_4(1)$, and $|s|,|p|<|f_4(1)|$. 

Suppose $s=1$. Word $t$ begins with 00, 01 or 1, or else $t=\epsilon$. Thus one of $1|0|0|100$, $1|0|1000$ or $1|10000$ is a prefix of $y$; here the vertical bars mark the divisions in ${\bf w_4}$ between $f_4$-images of letters. In each case, we see that $y$ must be preceded by 1 in ${\bf w_4}$, contradicting the assumption that $y$ is bispecial. If $s=11$, then one of  $11|0|0|10$, $11|0|100$ or $11|1000$ is a prefix of $y$. In each case $y$ is always preceded by 0, again contradicting the assumption that $y$ is bispecial. If $|s|\ge 3$, then $s$ ends in 011; however, 011 only arises in ${\bf w_4}$ as a suffix of $f_4(1)$, so that the letter preceding $s$ (and thus $y$) in ${\bf w_4}$ must always be the letter preceding $s$ in $f_4(1)$. The cases where $|s|>0$ therefore all lead to a contradiction. We conclude that $s=\epsilon.$

If $p=1$, then one of $011|0|0|1$, $0011|0|1$ and $10011|1$ is a suffix of $y$. This implies that $y$ is always followed in ${\bf w_4}$ by $0$, a contradiction, since $y$ is bispecial. If $p=10$, then one of $11|0|0|10$, $011|0|10$ and $0011|10$ is a suffix of $y$, and $y$ is always followed by $0$ in ${\bf w_4}$. If $p=100$, then $y$ has a suffix $1|0|0|100$, $11|0|100$ or $011|100$, and $y$ is always followed by a 0. If $|p|\ge 4$, then $p$ begins 1000, which only arises in ${\bf w_4}$ as a prefix of $f_4(1)$, so that the letter following $p$ (and thus $y$) in ${\bf w_4}$ must always be the letter following $p$ in $f_4(1)$. We conclude that $p=\epsilon$.

Since $p=s=\epsilon$, $y=f_4(t)$ for some factor $t$ of ${\bf t}$, as claimed.
\end{proof}
\begin{corollary}\label{bispecial} Let $y$ be a bispecial factor of ${\bf w_4}$ with $|y|\ge 6$. Let $py$ be a prefix of ${\bf w_4}$. Then $p=f_4(\tau)$ for some prefix $\tau$ of ${\bf t}$ and $y=f_4(t)$ for some factor $t$ of ${\bf t}$.
\end{corollary}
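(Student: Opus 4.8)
The statement to prove is Corollary~\ref{bispecial}: for a bispecial factor $y$ of ${\bf w_4}$ with $|y|\ge 6$ and any prefix $py$ of ${\bf w_4}$, we have $p=f_4(\tau)$ for some prefix $\tau$ of ${\bf t}$ and $y=f_4(t)$ for some factor $t$ of ${\bf t}$. The plan is simply to assemble the two preceding results. First I would invoke the immediately preceding lemma, which (since $y$ is bispecial and $|y|\ge 6$) gives the second conclusion directly: $y=f_4(t)$ for some factor $t$ of ${\bf t}$. This is exactly the hypothesis needed to feed into the corollary labeled \ref{1 in t} (or rather its strengthening, the Corollary stating that if $py$ is a prefix of ${\bf w_4}$ with $y=f_4(\hat t)$ and $|y|\ge 6$ then $p=f_4(\tau)$).

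So the argument is: by the bispecial lemma, $y=f_4(t)$; then since $|y|\ge 6$ and $y$ is an $f_4$-image of a factor of ${\bf t}$, the Corollary immediately preceding applies with $\hat t = t$, yielding $p=f_4(\tau)$ for some prefix $\tau$ of ${\bf t}$. Both conclusions are then in hand, and the proof is complete. There is essentially no new content; the corollary is a repackaging.

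I do not anticipate any real obstacle here — the only thing to be slightly careful about is checking that the hypotheses line up exactly: the bispecial lemma requires $|y|\ge 6$, which we have, and outputs "$y=f_4(t)$, $t$ a factor of ${\bf t}$", which is precisely the input format of the length-$\ge 6$ Corollary; that Corollary in turn also requires $|y|\ge 6$, again satisfied. One might also note that bispecial in particular implies, say, that $y$ occurs preceded by a letter, so "$py$ is a prefix" is a sensible thing to consider and $p$ is nonempty is not even needed. The entire proof can be written in two sentences.

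\begin{proof}
Since $y$ is bispecial and $|y|\ge 6$, the previous lemma gives $y=f_4(t)$ for some factor $t$ of ${\bf t}$. As $|y|\ge 6$, the preceding corollary applies, and we conclude that $p=f_4(\tau)$ for some prefix $\tau$ of ${\bf t}$.
\end{proof}
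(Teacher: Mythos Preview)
Your proposal is correct and matches the paper's intent: the corollary is stated without proof in the paper precisely because it is an immediate combination of the bispecial lemma (giving $y=f_4(t)$) and the length-$\ge 6$ corollary (giving $p=f_4(\tau)$), exactly as you wrote.
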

\begin{theorem} The word ${\bf w_4}$ avoids $xyx^Ryx$. 
\end{theorem}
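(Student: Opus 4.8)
The plan is to follow the pattern of the preceding constructions: eliminate all "short" instances by a finite search bounded via Lemma~\ref{bounding search} and Corollary~\ref{t bound}, and desynchronize the "long" instances using the $f_4$-rigidity of ${\bf w_4}$ recorded in the lemmas and corollaries above, ending in a contradiction with the overlap-freeness of ${\bf t}$. So suppose, for contradiction, that $XYX^RYX$ is a factor of ${\bf w_4}$ with $X,Y\ne\epsilon$, and fix a prefix $P=p\,X\,Y\,X^R\,Y\,X$ of ${\bf w_4}$; a finite search lets us assume $|X|$ is as large as we like.

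The central structural observation is that, inside the pattern, both copies of $Y$ are flanked by the same letters on each side: if $c$ and $d$ denote the last and first letters of $X$, then (since $X^R$ begins with $c$ and ends with $d$) the words $cYc$ and $dYd$ are both factors of ${\bf w_4}$. When $c\ne d$ this makes $Y$ a bispecial factor; assuming $|Y|\ge 6$ (the contrary being absorbed into the finite search), Corollary~\ref{bispecial} applied to the prefixes $(pX)Y$ and $(pXYX^R)Y$ of ${\bf w_4}$ gives $pX=f_4(\tau_1)$, $Y=f_4(t_Y)$, $pXYX^R=f_4(\tau_2)$ with $\tau_i$ prefixes and $t_Y$ a factor of ${\bf t}$; cancelling and using that $f_4$ is a prefix code yields $X^R=f_4(\rho)$ for a factor $\rho$ of ${\bf t}$, whence $X=f_4(\rho)^R=g(\rho^R)$, where $g$ is the morphism with $g(0)=0$ and $g(1)=f_4(1)^R=1100100001$. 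Now a word $g(\rho^R)$ with $\rho^R$ containing two $1$'s contains a factor $1100100001\,0^k\,1100100001$ with $k\in\{0,1,2\}$, and a short check shows none of these can occur in ${\bf w_4}$: placing the internal blocks forces the corresponding preimage in ${\bf t}$ to contain $100100$, impossible since ${\bf t}$ avoids $000$, $111$ and overlaps. Hence $\rho^R$, and so $X$, is short, contradicting $|X|$ large. The case $c=d$ runs along the same lines without appealing to $Y$: as $X$ and $X^R$ are both long factors, writing $X=s\,f_4(t)\,p$ in the standard way (as in the proof of the bispecial-factor lemma) gives $X^R=p^R\,g(t^R)\,s^R$, so $g(t^R)$ is a factor of ${\bf w_4}$; by the same incompatibility $t^R$, hence $t$, contains at most one $1$, forcing $|X|$ bounded — again a contradiction.

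I expect the main obstacle to lie in this desynchronization: making the "$f_4$ versus $g$" block incompatibility precise — either as a clean synchronization lemma or as a finite enumeration of the factors of ${\bf w_4}$ admitting both a $g$-decomposition and the usual $f_4$-decomposition — and, closely related, pinning down the boundary words $s,p$ in the standard decomposition of a long factor so that the cancellations above are legitimate. A subsidiary point to verify is that the finite searches genuinely cover the residual short cases uniformly, in particular that an instance with $|X|$ small but $|Y|$ arbitrarily large cannot occur: when $X$ is a short palindrome the pattern is a long instance of $xyxyx$ (e.g. $0Y0Y0$ or $1Y1Y1$), so this should reduce to the statement that ${\bf w_4}$ contains no long overlaps, while the short non-palindromic $X$ are again handled by the bispecial argument applied to $Y$.
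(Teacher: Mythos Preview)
Your outline has the right shape and your case $c\ne d$ is essentially the paper's Subcase~2a: both arguments use the bispeciality of $Y$ to get $X^R=f_4(\rho)$, and the paper then finishes cleanly by observing that the alignment forces $X$ (resp.\ $X^R$) to begin with $11$, which no $f_4$-image can do. Your version instead tries to show $g(\rho^R)$ cannot be a \emph{factor} of ${\bf w_4}$ once $\rho$ has two $1$'s; this is weaker (indeed $g(1)=1100100001$ \emph{is} a factor of ${\bf w_4}$, arising from $f_4(1001)$), and your stated justification ``forces ${\bf t}$ to contain $100100$, impossible since ${\bf t}$ avoids overlaps'' is wrong as written --- $100100$ is a square, not an overlap. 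The correct finishing move is exactly the boundary observation you set up but do not exploit: since $pXYX^RY=f_4(\tau_3)$, the third $X$ begins at an $f_4$-boundary, and $g(1)$ cannot begin at such a boundary.

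The real gap is the case $c=d$, which you treat as a ``subsidiary point'' but which is in fact the heart of the proof. Your argument there only yields ``$|X|$ is bounded'', which is vacuous: the paper already has $|X|\le 20$ from the mere fact that $X$ and $X^R$ are both factors, so deriving $|X|\le 32$ from the $g$-incompatibility gains nothing, and there is no ``$|X|$ large'' hypothesis to contradict. What you are missing is the minimality trick: take $XYX^RYX$ of \emph{minimal} length; if $X=a\chi a$ with $\chi\ne\epsilon$ then $\chi(aYa)\chi^R(aYa)\chi$ is a strictly shorter instance, forcing $X\in\{0,1,00,11\}$, hence $X=X^R$ and the pattern becomes an honest overlap $XYXYX$. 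Showing that ${\bf w_4}$ contains no such long overlap is not a corollary of anything already proved --- ${\bf w_4}$ is not overlap-free (it contains $0000$) --- and the paper devotes a substantial case analysis (its Subcases~2bi--2biv, splitting on the first and last letters of $pXs$ in the decomposition $Y=sf_4(t)p$) to rule these out via contradictions with the overlap-freeness of ${\bf t}$. Your proposal does not supply this, and the sentence ``should reduce to the statement that ${\bf w_4}$ contains no long overlaps'' is precisely the step that requires the most work.
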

\begin{proof}
Suppose not. Let $u$ be a factor of ${\bf w_4}$ of the form $ u=XYX^RYX$, with $X$, $Y\ne \epsilon$, and such that $u$ is as short as possible.

Both $X$ and $X^R$ are factors of ${\bf w_4}$. By Lemma~\ref{bounding search}, each length 21 factor of ${\bf w_4}$ will be a factor of $f_4(v)$, for some appropriate length 8 factor $v$ of ${\bf t}$. By Corollary~\ref{t bound}, every length 8 factor of ${\bf t}$ appears in the length 56 prefix of ${\bf t}$. We can therefore effectively list all length 21 factors of ${\bf w_4}$. One verifies that if $z$ is a length 21 factor of ${\bf w_4}$, then $z^R$ is {\bf not} a factor. Thus, since both $X$ and $X^R$ are factors of ${\bf w_4}$, $|X|\le 20$.\vspace{.1in}

\noindent{\bf Subcase 1: $|Y|\le 5$.} In this case, $|XYX^RYX|\le 70$, and by Lemma~\ref{bounding search}, $XYX^RYX$ is a factor of $f_4(v)$, for some factor $v$ of ${\bf t}$ of length  17. The length 17 factors of ${\bf t}$ all lie in the length 112 suffix $h^4(0110100)$ of ${\bf t}$, and a finite search shows that no factor $XYX^RYX$ occurs in $f_4(h^4(0110100))$. This case therefore cannot occur.\vspace{.1in}

\noindent{\bf Subcase 2: $|Y|\ge 6$.}

\noindent {\bf Subcase 2a: The first and last letters of $X$ are different.} In this case, write $X=aX'b$ where $a,b\in\{0,1\}$, $a\ne b$. Then $XYX^RYX=aX'bYb(X')^RaYaX'b$, and we see that  $Y$ is bispecial. Let $\pi XYX^RYX$ be a prefix of ${\bf w_4}$. Applying Corollary~\ref{bispecial} several times, we see that $\pi X=f_4(t_0)$, $\pi XY=f_4(t_1)$, $\pi XYX^R=f_4(t_2)$ and $\pi XYX^RY=f_4(t_3)$ for some prefixes $t_0, t_1, t_2$ and $t_3$ of ${\bf t}$. It follows that $X^R=f_4(t)$, where $t$ is the factor $(t_1)^{-1}t_2$ of ${\bf t}$. If the last letter of $X$ is a 1, then $\pi X=f_4(t_0)$ must have suffix 11; this implies that 11 is a prefix of $X^R=f_4(t)$, which is impossible. However, if the last letter of $X$ is a 0, then the first letter of $X$ is a 1. Thus the last letter of $X^R$ is a 1, and $11$ is a suffix of $X^R=f_4(t)$. Then 11 is a prefix  of $X$, and hence of $(\pi XYX^RY)^{-1}{\bf w_4}=f_4(t_3^{-1}{\bf t})$, which is also impossible. \vspace{.1in}

\noindent {\bf Subcase 2b: The first and last letters of $X$ are the same.}
Write $X=a\chi a$ where $a\in\{0,1\}$. Then $u=a\chi aYa\chi^RaYa\chi a$ has the proper factor $\chi\Upsilon\chi^R\Upsilon\chi$, where $\Upsilon=aYa$. If $\chi\ne\epsilon$, we have a contradiction, since $u$ was to be as short as possible. We conclude that $\chi=\epsilon$, and $X\in\{0,00,1,11\}$, whence $X=X^R$.

Since the finite search of Subcase 1 shows that we must have $|XYX^RYX|>70$, we may assume that $|Y|>(70-3|X|)/2\ge 32.$ Therefore,
write $Y=sf_4(t)p$ where $t$ is a factor of ${\bf t}$, $s$ is a suffix of $f_4(1)$, $p$ is a prefix of $f_4(1)$, and $|s|,|p|<|f_4(1)|$. It follows that $|f_4(t)|\ge 32 - 18=14$. We conclude that $|t|_1>0$. 

We now have $XYX^RYX=Xsf_4(t)pXsf_4(t)pX,$ where $1\le |X|\le 2$. Write 
$${\bf w_4}=\pi Xsf_4(t)pXsf_4(t)pX{\bf \sigma}.$$
 By Corollary~\ref{1 in t},
$$\pi Xs=f_4(t_0),
\pi Xsf_4(t)=f_4(t_1),
\pi Xsf_4(t)pXs=f_4(t_2),
\pi Xsf_4(t)pXsf_4(t)=f_4(t_3),
$$ 
for some prefixes $t_0$, $t_1$, $t_2$, $t_3$ of ${\bf t}$. Therefore, $pXs=f_4((t_1)^{-1}t_2)$.\vspace{.1in}

\noindent {\bf Subcase 2bi: The first and last letters of $pXs$ are both 0.} Since $p$ and $s$ are, respectively, a prefix and suffix of $f_4(1)$, which begins and ends with 1, this forces $p=s=\epsilon$, $X=0^n$, $n\in\{1,2\}$.  Thus
$${\bf w_4}=\pi 0^nf_4(t)0^nf_4(t)0^n{\bf \sigma}.$$
 By Corollary~\ref{1 in t},
$\pi 0^nf_4(t)0^nf_4(t)0^n=f_4(\tau)$, some prefix $\tau$ of ${\bf t}$, so that ${\bf t}$ contains the overlap $0^nt0^nt0^n$. This is impossible.\vspace{.1in}

\noindent {\bf Subcase 2bii: The first letter of $pXs$ is a 0, and the last letter is a 1.}
This forces $p=\epsilon$, since otherwise $p$, and hence $pXs$, starts with a 1. Then $X$ starts with a 0, so $X=0^n$, $n\in\{1,2\}$. Since $pXs=f_4((t_1)^{-1}t_2)$ ends in a 1, this 
forces $s=f_4(1)$, contradicting $|s|<|f_4(1)|$.\vspace{.1in}

\noindent {\bf Subcase 2biii: The first letter of $pXs$ is a 1, and the last letter is a 0.}
This forces $s=\epsilon$, since otherwise $s$, and hence $pXs$, ends with a 1. Then $X$ ends with a 0, so $X=0^n$, $n\in\{1,2\}$. Since $pXs=f_4((t_1)^{-1}t_2)$ starts with a 1, this 
forces $p=f_4(1)$, contradicting $|p|<|f_4(1)|$.\vspace{.1in}

\noindent {\bf Subcase 2biv: The first and last letters of $pXs$ are both 1.}
It follows that $pXs=f_4((t_1)^{-1}t_2)$ has $f_4(1)$ as a prefix and as a suffix. Now $|pXs|\le 9 + 2 + 9 = 20$, forcing $pXs\in\{f_4(1),f_4(11)\}$. If $pXs=f_4(1)$, then $pX$ begins with a 1, and $Xs$ ends in a 1. It follows that 
$\pi Xs=f_4(t_0)$ ends in a 1, so that 1 is the last letter of $t_0$, while $pX{\bf \sigma}=f_4(t_3^{-1}{\bf t})$, so that $t_3^{-1}{\bf t}$ begins with 1. 
Thus ${\bf t}$ contains the overlap $1t1t1$, which is impossible.

On the other hand, if $pXs=f_4(11)$, then we must have $p=100001001$, $X=11$, $s=000010011$.  Again,  
$\pi Xs=f_4(t_0)$ ends $1f_4(1)$, so that 11 is a suffix  of $t_0$; also $pX{\bf \sigma}=f_4(t_3^{-1}{\bf t})$, has prefix $f_4(1)1$, so that $(t_3)^{-1}{\bf t}$ has prefix 11. 
Thus ${\bf t}$ contains the overlap $11t11t11$, which is impossible. 
\end{proof}

\section{Conclusion/Further Discussion}

We note that in 1992, Roth \cite{roth} proved that every length six binary pattern is 2-avoidable. Our Theorem~\ref{S_2} shows that this is also true for binary patterns with reversal.

It would be nice now to perhaps see if our results could be generalized to ternary patterns or beyond. Another natural desiridatum would be an effective characterization of which patterns with reversal are avoidable.

\end{document}